\newtheorem{theorem}{Theorem}[section]
\newtheorem{corollary}[theorem]{Corollary}
\newtheorem{proposition}[theorem]{Proposition}
\theoremstyle{definition}
\newtheorem{definition}[theorem]{Definition}
\newtheorem{construction}[theorem]{Construction}
\newtheorem{remark}[theorem]{Remark}
\newtheorem*{conjecture}{Conjecture}
\newtheorem{example}[theorem]{Example}
\newtheorem*{acknowledgement}{Acknowledgement}
\newcommand{\tr}[2][]{\ensuremath{{\textnormal{tr}}_{#1}(#2)}}
\newcommand{\uk}[2][]{\ensuremath{{\textnormal{k}}_{#1}(#2)}}
\newcommand{\MIF}[2][]{\ensuremath{{\textnormal{MIF}}_{#1}(#2)}}
\newcommand{\CIF}[2][]{\ensuremath{{\textnormal{CIF}}_{#1}(#2)}}
\newcommand{\M}[2][]{\ensuremath{{\textnormal{M}}_{#1}(#2)}}
\def\imod#1{\allowbreak\mkern10mu({\operator@font mod}\,\,#1)}
\def\@textbottom{\vskip\z@\@plus 18pt}
\let\@texttop\relax
\title[Closed Intersecting Family of $k-$sets]{\textnormal{Closed Intersecting Families of finite sets\\ and their applications}}
\author{Kaushik Majumder}
\address{\newline Theoretical Statistics and Mathematics Unit\\ \newline Indian Statistical Institute, Bangalore Centre \\ \newline $8^{th}$ Mile Mysore Road, Bangalore - $560059$, India.\newline \textnormal{\textestimated-Mail}: {\tt kaushik\MVAt isibang.ac.in}}
\date{6th December,2014.}
\subjclass[2010]{Primary: 05D15, 05D05; Secondary: 05C65}
\keywords{Uniform hypergraph, Intersecting family of $k-$sets, Blocking set, Transversal}
\begin{document}

\begin{abstract}
Paul Erd\H{o}s and L\'{a}szl\'{o} Lov\'{a}sz established that any \emph{maximal intersecting family of $k-$sets} has at most $k^{k}$ blocks. They introduced the problem of finding the maximum possible number of blocks in such a family.
They also showed that there exists a maximal intersecting family of $k-$sets with approximately $(e-1)k!$ blocks. Later P\'{e}ter Frankl, Katsuhiro Ota and Norihide Tokushige used a remarkable construction to prove the existence of a maximal intersecting family of $k-$sets with at least $(\frac{k}{2})^{k-1}$ blocks. In this article we introduce the notion of a \emph{closed intersecting family of $k-$sets} and show that such a family can always be embedded in a maximal intersecting family of $k-$sets. Using this result we present two examples which disprove two special cases of one of the conjectures of Frankl et al. This article also provides comparatively simpler construction of maximal intersecting families of $k-$sets with at least $(\frac{k}{2})^{k-1}$ blocks.
\end{abstract}

\maketitle

\section{Introduction}
 
By a family we mean a family (set) of finite sets. Such a family is called \emph{intersecting} if any two of its member have non empty intersection. A \emph{maximal intersecting family} is an intersecting family which can not be embedded properly into any larger intersecting family. In this article, our idea is to decompose a maximal intersecting family into some suitable subfamilies and study these subfamilies to gain a better understanding of such a family. Using this idea we are able to locate a similarity between the recursive Erd\H{o}s-Lov\'{a}sz construction in \cite[Construction~(c), Page~620]{MR0382050} and a non recursive Frankl-Ota-Tokushige construction in \cite[\textsection~2, Example~1 \& Example~2]{MR1383503}. We find that each maximal intersecting family has a ``core'' which generates it. We call this core a \emph{closed intersecting family}. In \cite{MR1383503}, Frankl et al. conjectured that the maximal intersecting family of $k-$sets constructed by them has the largest number of blocks, and 
it is the only such family (up to isomorphism)  with these many blocks. We use the theory developed here to prove that both these conjectures are false, at least for small $k$. (See Example~\ref{FOT(4,2)} and Example~\ref{FOT(4&5,3)} below.) Before going into the technicalities let us give some notations and definitions.

\subsection{Notations and Terminologies :} Let $\mathcal{G}$ and $\mathcal{H}$ be two non empty families of non empty sets. 
\begin{enumerate}[(a)]
\item $A\sqcup B$ denotes the union of two disjoint sets $A$ and $B$.
\item $\mathcal{G}\sqcup \mathcal{H}$ denotes the union of two disjoint families $\mathcal{G}$ and $\mathcal{H}$.
\item For any set $A$, $|A|$ will denote the cardinality of $A$.
\item Any $B\in\mathcal{G}$ is called a \emph{block} of $\mathcal{G}$.
\item A family $\mathcal{G}$ is said to be \emph{uniform} if all its blocks have the same size. If $\mathcal{G}$ is a uniform family we shall denote its common block size by $\uk{\mathcal{G}}$.
\item The \emph{point set} of the family $\mathcal{G}$ is defined as $\underset{B\in\mathcal{G}}{\cup} B$ and is denoted by $P_{\mathcal{G}}$. Any $x\in P_{\mathcal{G}}$ is called a \emph{point} of $\mathcal{G}$. 
\item Suppose $P_{\mathcal{G}}$ and $P_{\mathcal{H}}$ are disjoint. Then $\mathcal{G}\circledast\mathcal{H}$ denotes the collection of all sets of the form $A\sqcup B$, where $A\in\mathcal{G}$ and $B\in\mathcal{H}$. If $\mathcal{G}$ consists of a single $k-$set $B$, then we denote $\mathcal{G}\circledast\mathcal{H}$ by $B\circledast\mathcal{H}$. If $\mathcal{G}$ consists of a single $1-$set $\{\alpha\}$, then we denote $\mathcal{G}\circledast\mathcal{H}$ by $\alpha\circledast\mathcal{H}$.
\item A family $\mathcal{G}$ is said to be \emph{isomorphic} to the family $\mathcal{H}$ if there exists a one-to-one and onto function $\phi: P_{\mathcal{G}}\rightarrow P_{\mathcal{H}}$ such that $\phi(B)\in\mathcal{H}$ if and only if $B\in\mathcal{G}$. 
\item  A \emph{blocking set} of a family $\mathcal{G}$ is a set $C$ which intersects every block of $\mathcal{G}$. A \emph{transversal} of $\mathcal{G}$ to be a blocking set of $\mathcal{G}$ with the smallest possible size \--- in case $\mathcal{G}$ has a finite blocking set. In this case we denote the common size of its transversals by $\tr{\mathcal{G}}$. If $\mathcal{G}$ has no finite blocking set, we may put $\tr{\mathcal{G}}=\infty$. If $\tr{\mathcal{G}}<\infty$, we denote the family of transversals of $\mathcal{G}$ by $\mathcal{G}^{\top}$. Note that $\mathcal{G}^{\top}$ is a uniform family with $\uk{\mathcal{G}^{\top}}=\tr{\mathcal{G}}$.
\end{enumerate}

\begin{definition}
A family $\mathcal{F}$ is said to be a \emph{maximal intersecting family} (in short MIF) if $\tr{\mathcal{F}}<\infty$ and $\mathcal{F}=\mathcal{F}^{\top}$. We use $\MIF{k}$ as a generic name for uniform MIF's with $\uk{\mathcal{F}}=k$.
\end{definition}

Erd\H{o}s and Lov\'{a}sz established, in their landmark article \cite{MR0382050}, that any $\MIF{k}$ has at most $k^{k}$ blocks. They showed by means of an example that there exists a $\MIF{k}$ with approximately $(e-1)k!$ blocks. This example is constructed by a recursive procedure \cite[Construction~(c), Page~620]{MR0382050} starting with the unique $\MIF{1}$. Lov\'{a}sz conjectured in \cite{MR510823}, that the $\MIF{k}$ thus constructed was the extremal one. Later in \cite{MR1383503}, an extremely elegant and complicated example was given to show that there exists a $\MIF{k}$ with at least (approximately) $(\frac{k}{2})^{k-1}$ blocks (i.e. it has the largest number of blocks) and it disproves Lov\'{a}sz conjecture. In this article, we present comparatively simpler construction (see $\mathbb{G}(k,t)$ in Construction~\ref{circular_construction}) to prove that there exists a $\MIF{k}$ with at least (approximately) $(\frac{k}{2})^{k-1}$ blocks. (More precisely, we present an alternative proof of 
\cite[\textsection~2, Theorem~1]{MR1383503}, see Corollary~\ref{beating_LovaszConjecture} below). In  \cite{MR1383503}, it is conjectured that the construction of Frankl et al. yields the unique $\MIF{k}$ with the largest number of blocks. Here we show that both parts of this conjecture are false. Specifically, the uniqueness part is incorrect for $k=4$, while the optimality part is incorrect for $k=5$. 

\subsection{Organisation of this article} Section~\ref{CIF} of this article is the introductory part about closed intersecting families, some of its properties and examples. In Section~\ref{circular_constructions} we study  constructions over the cycle graph. In Section~\ref{beating} it is shown that Example~\ref{FOT(4,2)} and Example~\ref{FOT(4&5,3)} are counter examples to \cite[\textsection~3, Conjecture~4]{MR1383503} in special cases. In this final section we close this article by stating a conjecture.

\section{Closed Intersecting Family of finite sets}\label{CIF}

In this section, we present results of an ab initio study on closed intersecting families. 

\begin{definition}
Let $\mathcal{F}$ be a uniform family with $\uk{\mathcal{F}}=k$ and $\tr{\mathcal{F}}=t$. $\mathcal{F}$ is said to be a \emph{closed intersecting family} (in short CIF) if $\tr{\mathcal{F}}\leq\uk{\mathcal{F}}-1$ and $\mathcal{F}=(\mathcal{F}\sqcup\mathcal{F}^{\top})^{\top}$. We use $\CIF{k,t}$ as a generic name for CIF's $\mathcal{F}$ with $\uk{\mathcal{F}}=k$ and $\tr{\mathcal{F}}=t$. Note that any closed intersecting family is necessarily an intersecting family.
\end{definition}

We have the following characterisation.  

\begin{proposition}\label{equivalentclosure}
Let $\mathcal{F}$ be an intersecting family of $k-$sets with $\tr{\mathcal{F}}\leq k-1$. Then the following statements are equivalent:
\begin{enumerate}[\normalfont(a)]
\item\label{descriptional} Any $k-$set which is a blocking set of $\mathcal{F}\sqcup\mathcal{F}^{\top}$ is a block of $\mathcal{F}$.
\item\label{applicational} Any $k-$set which is a blocking set of $\mathcal{F}$ but itself is not a block of $\mathcal{F}$, then it is not a blocking set of $\mathcal{F}^{\top}$.
\item\label{representational} $\mathcal{F}=(\mathcal{F}\sqcup\mathcal{F}^{\top})^{\top}$.
\end{enumerate}
\end{proposition}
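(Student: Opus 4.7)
The plan is to set up two auxiliary facts once and for all, then prove (a)$\Leftrightarrow$(b) as a pure logical rephrasing, and finally close the loop by proving (a)$\Leftrightarrow$(c).

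First I would record: (i) every $B\in\mathcal{F}$ is a blocking set of $\mathcal{F}\sqcup\mathcal{F}^{\top}$ (because $\mathcal{F}$ is intersecting so $B$ meets every $B'\in\mathcal{F}$, and every $T\in\mathcal{F}^{\top}$ is a transversal of $\mathcal{F}$ so $T\cap B\neq\emptyset$), and in particular $\tr{\mathcal{F}\sqcup\mathcal{F}^{\top}}\leq k$; and (ii) by the very definition of the disjoint union, a set $C$ blocks $\mathcal{F}\sqcup\mathcal{F}^{\top}$ iff $C$ blocks $\mathcal{F}$ and $C$ blocks $\mathcal{F}^{\top}$.

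For (a)$\Leftrightarrow$(b), observation (ii) rewrites (a) as ``any $k$-set which blocks both $\mathcal{F}$ and $\mathcal{F}^{\top}$ is in $\mathcal{F}$''. Its contrapositive, restricted to $k$-sets that block $\mathcal{F}$, reads ``any $k$-set blocking $\mathcal{F}$ which fails to lie in $\mathcal{F}$ must fail to block $\mathcal{F}^{\top}$'', which is precisely (b); running the deduction in reverse gives the converse.

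The easy half of (a)$\Leftrightarrow$(c) is (c)$\Rightarrow$(a): if $\mathcal{F}=(\mathcal{F}\sqcup\mathcal{F}^{\top})^{\top}$, then the transversal family on the right is uniform of common block size $k$, so $\tr{\mathcal{F}\sqcup\mathcal{F}^{\top}}=k$, whence any $k$-set blocking $\mathcal{F}\sqcup\mathcal{F}^{\top}$ is automatically a minimum-size blocking set, i.e.\ a transversal, i.e.\ a member of $\mathcal{F}$. The direction (a)$\Rightarrow$(c) is the crux. By (i) the inclusion $\mathcal{F}\subseteq(\mathcal{F}\sqcup\mathcal{F}^{\top})^{\top}$ will follow as soon as I know that $\tr{\mathcal{F}\sqcup\mathcal{F}^{\top}}$ is exactly $k$; granted this, (a) gives the reverse inclusion immediately because the transversals are then $k$-sets blocking $\mathcal{F}\sqcup\mathcal{F}^{\top}$. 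To pin down the transversal number, assume for contradiction that some blocking set $S$ of $\mathcal{F}\sqcup\mathcal{F}^{\top}$ has $|S|=s<k$; choose $k-s$ fresh points $y_1,\ldots,y_{k-s}$ lying outside $P_{\mathcal{F}}$ and form $S^{*}=S\cup\{y_1,\ldots,y_{k-s}\}$. Then $S^{*}\supseteq S$ still blocks $\mathcal{F}\sqcup\mathcal{F}^{\top}$, so (a) declares $S^{*}$ to be a block of $\mathcal{F}$, which forces $S^{*}\subseteq P_{\mathcal{F}}$ and contradicts $y_1\notin P_{\mathcal{F}}$.

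The main obstacle in the whole argument is precisely this step: exploiting the $k$-set hypothesis in (a) to exclude blocking sets of $\mathcal{F}\sqcup\mathcal{F}^{\top}$ smaller than $k$. The ``fresh points'' extension is the cleanest device for this; it relies only on the standard convention that we are free to embed $P_{\mathcal{F}}$ into a larger ambient universe, a one-line observation I would spell out so that the reader sees why (a), which only talks about $k$-sets, is strong enough to pin $\tr{\mathcal{F}\sqcup\mathcal{F}^{\top}}$ to $k$ and thereby unlock the representational identity (c).
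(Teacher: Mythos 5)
Your proof is correct and follows essentially the same route as the paper's: the contrapositive rephrasing for (a)$\Leftrightarrow$(b), and the device of padding a small blocking set with fresh points outside $P_{\mathcal{F}}$ to force $\tr{\mathcal{F}\sqcup\mathcal{F}^{\top}}=k$ in the (a)$\Leftrightarrow$(c) step. You are in fact slightly more careful than the paper in making the inclusion $\mathcal{F}\subseteq(\mathcal{F}\sqcup\mathcal{F}^{\top})^{\top}$ explicit.
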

\begin{proof}
Firstly we prove \eqref{descriptional} $\Leftrightarrow$ \eqref{applicational} and then we prove \eqref{representational} $\Leftrightarrow$ \eqref{descriptional}.

Let $C$ be a $k-$set which is a blocking set of $\mathcal{F}$ and $C\notin\mathcal{F}$. Suppose $C$ is a blocking set of $\mathcal{F}^{\top}$, then by \eqref{descriptional} $C\in\mathcal{F}$, a contradiction. Hence $C$ is not a blocking set of $\mathcal{F}^{\top}$. Conversely, let $C$ be a $k-$set which is a blocking set of $\mathcal{F}\sqcup\mathcal{F}^{\top}$. Suppose $C\notin\mathcal{F}$, then by \eqref{applicational} $C$ is a blocking set of $\mathcal{F}^{\top}$, a contradiction to the assumption, so our supposition $C\notin\mathcal{F}$ was wrong. Hence $C\in\mathcal{F}$.

From \eqref{representational} it implies that $\tr{\mathcal{F}\sqcup\mathcal{F}^{\top}}=k$. Let $C$ be a blocking $k-$set of $\mathcal{F}\sqcup\mathcal{F}^{\top}$, then $C$ is transversal of the family $\mathcal{F}\sqcup\mathcal{F}^{\top}$. Hence $C\in\mathcal{F}$. Conversely, let $C$ be a transversal of $\mathcal{F}\sqcup\mathcal{F}^{\top}$. Suppose $|C|\leq k-1$. Consider a set $X$,  of size $k-|C|$, disjoint from $P_{\mathcal{F}}$. Then $X\sqcup C$ is a blocking $k-$set of $\mathcal{F}\sqcup\mathcal{F}^{\top}$ and it is not a block of $\mathcal{F}$, a contradiction to \eqref{descriptional}. So $|C|=k$ and hence by \eqref{descriptional} $C\in\mathcal{F}$, which proves \eqref{representational}.
\end{proof}

Henceforth, by \emph{closure property} we refer any one of \eqref{descriptional}, \eqref{applicational} and \eqref{representational} in our study.

\begin{example}
Let $k,t$ be positive integers with $t\leq k-1$. All $k-$subsets of a $(k+t-1)-$set form a $\CIF{k,t}$ and its transversals are all $t-$subsets of the point set. It has $k+t-1$ points, $\binom{k+t-1}{k}$ blocks and $\binom{k+t-1}{t}$ transversals.
\end{example}

\begin{example}
Let $k,t$ be positive integers with $2\leq t\leq k-1$. Let $P$ be a $(k+t-2)-$set. For each bi partition 
$(C, P\smallsetminus C)$ of $P$ with $|C|=t-1$, we introduce a new symbol $x_{c}$. We consider the family of all $k-$subsets of $P$ plus all $k-$sets of the form $\{x_{c}\}\sqcup(P\smallsetminus C)$. It is a $\CIF{k,t}$. Its transversals are all $t-$subsets of $P$ plus all $t-$sets of the form $\{x_{c}\}\sqcup C$. It has $k+t-2+\binom{k+t-2}{k-1}$ points, $\binom{k+t-2}{k}+\binom{k+t-2}{k-1}$ blocks and $\binom{k+t-2}{t}+\binom{k+t-2}{t-1}$ transversals.
\end{example}

\begin{theorem}\label{inverse}
Let $\mathcal{F}$ be a subfamily of a $\MIF{k}$ $\mathcal{X}$ such that $t:=\tr{\mathcal{F}}\leq k-1$ and $\mathcal{X}\smallsetminus\mathcal{F}=\mathcal{A}\circledast\mathcal{F}^{\top}$ for some family $\mathcal{A}$. Then $\mathcal{F}$ is a $\CIF{k,t}$ if and only if $\mathcal{A}$ is a $\MIF{k-t}$. 
\end{theorem}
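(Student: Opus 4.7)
The plan hinges on the following \emph{central observation}, which I would prove first: no element of $\mathcal{F}^{\top}$ is a blocking set of $\mathcal{F}^{\top}$. Indeed, if some $B\in\mathcal{F}^{\top}$ blocked $\mathcal{F}^{\top}$, then $B$ would block $\mathcal{X}$: it blocks $\mathcal{F}$ as a transversal, and for each $A'\sqcup B'\in\mathcal{A}\circledast\mathcal{F}^{\top}$ one has $B\cap(A'\sqcup B')=B\cap B'\neq\emptyset$, the term $B\cap A'$ dropping out because $P_{\mathcal{A}}\cap P_{\mathcal{F}^{\top}}=\emptyset$ is built into the $\circledast$-operation. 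But $|B|=t\leq k-1<k=\tr{\mathcal{X}}$, contradicting $\mathcal{X}\in\MIF{k}$.

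For the ($\Rightarrow$) direction, assume $\mathcal{F}$ is a $\CIF{k,t}$. Uniformity of $\mathcal{A}$ with $\uk{\mathcal{A}}=k-t$ is immediate from the decomposition. For $\mathcal{A}$ intersecting: if $A_{1},A_{2}\in\mathcal{A}$ were disjoint, then $(A_{1}\sqcup B_{1})\cap(A_{2}\sqcup B_{2})=B_{1}\cap B_{2}\neq\emptyset$ for all $B_{1},B_{2}\in\mathcal{F}^{\top}$, so $\mathcal{F}^{\top}$ would be intersecting, contradicting the central observation. For $\tr{\mathcal{A}}=k-t$: a blocking set $C\subseteq P_{\mathcal{A}}$ of $\mathcal{A}$ with $|C|<k-t$, paired with any $B\in\mathcal{F}^{\top}$, would yield $C\sqcup B$ blocking $\mathcal{X}$ and of size $<k=\tr{\mathcal{X}}$. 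Finally for $\mathcal{A}^{\top}\subseteq\mathcal{A}$: take any transversal $A^{*}$ of $\mathcal{A}$ and any $B\in\mathcal{F}^{\top}$; then $A^{*}\sqcup B$ blocks $\mathcal{X}$ and has size $k$, so $A^{*}\sqcup B\in\mathcal{X}$. It cannot lie in $\mathcal{F}$, for property~\eqref{representational} says every member of $\mathcal{F}$ is a transversal of $\mathcal{F}\sqcup\mathcal{F}^{\top}$ and hence blocks $\mathcal{F}^{\top}$, which would make $B$ a blocking set of $\mathcal{F}^{\top}$ and contradict the central observation. Therefore $A^{*}\sqcup B\in\mathcal{A}\circledast\mathcal{F}^{\top}$, and uniqueness of the $\sqcup$-decomposition (from $P_{\mathcal{A}}\cap P_{\mathcal{F}^{\top}}=\emptyset$) yields $A^{*}\in\mathcal{A}$.

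For the ($\Leftarrow$) direction, assume $\mathcal{A}$ is a $\MIF{k-t}$ and verify property~\eqref{descriptional} of Proposition~\ref{equivalentclosure}. Let $C$ be a $k$-set blocking $\mathcal{F}\sqcup\mathcal{F}^{\top}$. Then $C$ also blocks $\mathcal{A}\circledast\mathcal{F}^{\top}$ through its intersection with the $\mathcal{F}^{\top}$-coordinate, so $C$ blocks $\mathcal{X}$; since $|C|=\tr{\mathcal{X}}$, we have $C\in\mathcal{X}$. If $C=A\sqcup B\in\mathcal{A}\circledast\mathcal{F}^{\top}$, then $C$'s blocking of $\mathcal{F}^{\top}$ gives $B\cap B'\neq\emptyset$ for all $B'\in\mathcal{F}^{\top}$, i.e.\ $B$ blocks $\mathcal{F}^{\top}$ — again contradicting the central observation. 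Hence $C\in\mathcal{F}$.

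The chief subtlety lies in the $\mathcal{A}^{\top}\subseteq\mathcal{A}$ step of the forward direction: ruling out $A^{*}\sqcup B\in\mathcal{F}$ cannot simply invoke the (in general unavailable) disjointness $P_{\mathcal{A}}\cap P_{\mathcal{F}}=\emptyset$, and instead must proceed through the CIF hypothesis via the central observation. Everything else reduces to routine use of the intersection identity $(A\sqcup B)\cap(A'\sqcup B')=(A\cap A')\cup(B\cap B')$ afforded by the $\circledast$-structure.
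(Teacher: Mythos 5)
Your proof is correct, and while the forward direction follows essentially the same chain as the paper's (uniformity, intersecting-ness, $\tr{\mathcal{A}}=k-t$, then $\mathcal{A}^{\top}\subseteq\mathcal{A}$ via the uniqueness of the $\circledast$-decomposition), your organization around the ``central observation'' is a genuine and worthwhile variation. The paper obtains the fact that every $T\in\mathcal{F}^{\top}$ has a disjoint mate in $\mathcal{F}^{\top}$ \emph{from the CIF hypothesis} ($\tr{\mathcal{F}\sqcup\mathcal{F}^{\top}}=k>t$), so that fact is only available in the forward direction; you instead derive it from the ambient structure alone ($B$ would be a blocking set of $\mathcal{X}$ of size $t<k=\tr{\mathcal{X}}$), which makes it usable in both directions. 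This pays off most in the converse: the paper verifies condition \eqref{applicational} of Proposition~\ref{equivalentclosure} by a two-case analysis ($C\notin\mathcal{A}\circledast\mathcal{F}^{\top}$ versus $C\in\mathcal{A}\circledast\mathcal{F}^{\top}$), the second case invoking the local maximality of $\mathcal{X}$ to produce a block meeting $C$ in exactly one point $\alpha\in P_{\mathcal{A}}$; you verify condition \eqref{descriptional} in three lines by noting that a blocking $k$-set of $\mathcal{F}\sqcup\mathcal{F}^{\top}$ must lie in $\mathcal{X}$ and cannot lie in $\mathcal{A}\circledast\mathcal{F}^{\top}$ without violating the central observation. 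Your version also makes visible something the paper's converse leaves implicit: neither your argument nor the paper's actually uses the maximality of $\mathcal{A}$ in that direction, so the hypotheses of the theorem already force $\mathcal{F}$ to be a $\CIF{k,t}$ (and hence, by the forward direction, force $\mathcal{A}$ to be a $\MIF{k-t}$); the stated equivalence is true because both sides hold automatically. You are also right that the one place the CIF hypothesis is genuinely needed in the forward direction is in ruling out $A^{*}\sqcup B\in\mathcal{F}$, since nothing in the hypotheses prevents blocks of $\mathcal{F}$ from meeting $P_{\mathcal{A}}$.
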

\begin{proof}
Let $\mathcal{F}$ be a $\CIF{k,t}$ and let $T\in\mathcal{F}^{\top}$. Then, by assumption, $|T|=t\leq k-1$. Hence there exists at least one $T^{'}\in\mathcal{F}\sqcup\mathcal{F}^{\top}$ disjoint from $T$. Since $T^{'}\cap T=\emptyset$ and $T\in\mathcal{F}^{\top}$, it follows that $T^{'}\notin\mathcal{F}$. So $T^{'}\in\mathcal{F}^{\top}$. Thus for each $T\in\mathcal{F}^{\top}$ there exists at least one $T^{'}\in\mathcal{F}^{\top}$ with $T^{'}\cap T=\emptyset$. Since $\mathcal{A}\circledast\mathcal{F}^{\top}$ is an intersecting family of $k-$sets, it follows that $\mathcal{A}$ is an intersecting family of $(k-t)-$sets.

Let $C\in\mathcal{A}^{\top}$. Then, for each $T\in\mathcal{F}^{\top}$, $C\sqcup T$ is a blocking set of $\mathcal{X}$ and hence $|C\sqcup T|\geq k$. Thus $|C|\geq k-t$, with equality if $C\sqcup T\in\mathcal{X}$ for all $T\in\mathcal{F}^{\top}$. Since each block of $\mathcal{A}$ is a blocking set of size $k-t$ for $\mathcal{A}$, it follows that $\tr{\mathcal{A}}=k-t$ and $\mathcal{A}\subseteqq\mathcal{A}^{\top}$. Also if $C\in\mathcal{A}$ and $T\in\mathcal{F}^{\top}$, then $C\sqcup T\in\mathcal{X}$. The argument in the previous paragraph shows that $C\sqcup T$ is not a blocking set of $\mathcal{F}^{\top}$ and hence 
$C\sqcup T\notin(\mathcal{F}\sqcup\mathcal{F}^{\top})^{\top}=\mathcal{F}$. Thus 
$C\sqcup T\in\mathcal{X}\smallsetminus\mathcal{F}=\mathcal{A}\circledast\mathcal{F}^{\top}$. Hence $C\in\mathcal{A}$. Thus $\mathcal{A}^{\top}\subseteqq\mathcal{A}$ and hence $\mathcal{A}=\mathcal{A}^{\top}$. Thus $\mathcal{A}$ is a $\MIF{k-t}$.

Conversely, suppose $\mathcal{A}$ is a $\MIF{k-t}$. Since $\mathcal{F}$ is an intersecting family of $k-$sets, every block of $\mathcal{F}$ is a blocking $k-$set of $\mathcal{F}\sqcup\mathcal{F}^{\top}$ and hence $\tr{\mathcal{F}\sqcup\mathcal{F}^{\top}}\leq k$. To show that $\mathcal{F}$ is a $\CIF{k,t}$, it suffices to show that if $C$ is a blocking set of size $k$ for $\mathcal{F}$ which is not a block of $\mathcal{F}$, then $C$ is not a blocking set of $\mathcal{F}^{\top}$. If $C\notin\mathcal{A}\circledast\mathcal{F}^{\top}$, then $C$ is not a block of $\mathcal{X}$ and hence there is a block $B\in\mathcal{X}$ disjoint from $C$. But $C$ is a blocking set of $\mathcal{F}$. So $B\in\mathcal{A}\circledast\mathcal{F}^{\top}$. Then $B\cap P_{\mathcal{F}}$ is a block of $\mathcal{F}^{\top}$ disjoint from $C$, so that $C$ is not a blocking set of $\mathcal{F}^{\top}$ in this case. On the other hand, if $C\in\mathcal{A}\circledast\mathcal{F}^{\top}$, then choose a point $\alpha\in C\cap P_{\mathcal{A}}$. (It 
exists since $k=|C|>t=\uk{\mathcal{F}^{\top}}$.) Since $C$ is a block of a $\MIF{k}$ $\mathcal{X}$, there exists at least one $B\in\mathcal{X}$ such that $B\cap C=\{\alpha\}$. Since $\alpha\notin P_{\mathcal{F}}$, it follows that $B\in\mathcal{A}\circledast\mathcal{F}^{\top}$ and hence $B\cap P_{\mathcal{F}}$ is a block of $\mathcal{F}^{\top}$ disjoint from $C$. So $C$ is not a blocking set of $\mathcal{F}^{\top}$ in this case also.  
\end{proof}

\begin{corollary}
Let $\mathcal{X}$ be a $\MIF{k}$, then there exists at least one $\CIF{k,k-1}$ $\mathcal{F}$ and $\alpha\in P_{\mathcal{X}}\smallsetminus P_{\mathcal{F}}$ such that $\mathcal{X}=\mathcal{F}\sqcup\alpha\circledast\mathcal{F}^{\top}$.
\end{corollary}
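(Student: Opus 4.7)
The plan is to reduce the corollary to Theorem~\ref{inverse} applied with $\mathcal{A}$ equal to the unique $\MIF{1}$, namely $\{\{\alpha\}\}$ for a well-chosen $\alpha$. Accordingly, I pick an arbitrary point $\alpha\in P_{\mathcal{X}}$ and split the blocks of $\mathcal{X}$ by whether or not they contain $\alpha$: set $\mathcal{F}:=\{B\in\mathcal{X}:\alpha\notin B\}$ and $\mathcal{G}:=\{B\in\mathcal{X}:\alpha\in B\}$, so that $\mathcal{X}=\mathcal{F}\sqcup\mathcal{G}$ and $\alpha\notin P_{\mathcal{F}}$ by construction. Note that $\mathcal{F}\neq\emptyset$, because otherwise $\{\alpha\}$ would be a transversal of $\mathcal{X}$, forcing $k=\tr{\mathcal{X}}=1$ and contradicting the existence of a $\CIF{k,k-1}$.

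Next I would pin down $t:=\tr{\mathcal{F}}$. Since $\mathcal{X}=\mathcal{X}^{\top}$, we have $\tr{\mathcal{X}}=k$. For any $B\in\mathcal{G}$ the set $B\setminus\{\alpha\}$ has size $k-1$ and meets every block of $\mathcal{F}$ (using that $\mathcal{X}$ is intersecting and $\alpha$ does not lie in blocks of $\mathcal{F}$), so $t\leq k-1$. Conversely, if $C$ were a blocking set of $\mathcal{F}$ with $|C|<k-1$, then $C\cup\{\alpha\}$ would be a blocking set of $\mathcal{X}$ of size strictly less than $k$, contradicting $\tr{\mathcal{X}}=k$. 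Hence $t=k-1$.

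Now I would identify $\mathcal{G}$ with $\alpha\circledast\mathcal{F}^{\top}$. The inclusion $\mathcal{G}\subseteq\alpha\circledast\mathcal{F}^{\top}$ is already established in the previous paragraph, since each $B\in\mathcal{G}$ has the shape $\{\alpha\}\sqcup T$ with $T:=B\setminus\{\alpha\}\in\mathcal{F}^{\top}$. For the reverse inclusion, given $T\in\mathcal{F}^{\top}$, the $k$-set $\{\alpha\}\sqcup T$ meets every block of $\mathcal{F}$ (because $T$ does) and every block of $\mathcal{G}$ (because $\alpha$ does), so it is a blocking $k$-set of $\mathcal{X}$; using $\mathcal{X}=\mathcal{X}^{\top}$ again, it lies in $\mathcal{X}$, and therefore in $\mathcal{G}$. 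Thus $\mathcal{X}\setminus\mathcal{F}=\mathcal{G}=\{\alpha\}\circledast\mathcal{F}^{\top}$.

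Finally, I apply Theorem~\ref{inverse} with the family $\mathcal{A}=\{\{\alpha\}\}$: this $\mathcal{A}$ is visibly the unique $\MIF{k-t}=\MIF{1}$, and by the theorem $\mathcal{F}$ is a $\CIF{k,k-1}$, completing the proof. The real technical content sits in the two verifications in the middle paragraphs; once one realises that the corollary simply asks for the $t=k-1$ instance of Theorem~\ref{inverse}, the only genuine work is checking $\tr{\mathcal{F}}=k-1$ and that every transversal of $\mathcal{F}$ actually extends to a block of $\mathcal{X}$ through $\alpha$. The latter is the single place where the hypothesis $\mathcal{X}=\mathcal{X}^{\top}$ is used in an essential way, and I expect this to be the main (though quite mild) obstacle.
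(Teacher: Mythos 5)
Your proposal is correct and follows essentially the same route as the paper: fix $\alpha\in P_{\mathcal{X}}$, let $\mathcal{F}$ be the blocks avoiding $\alpha$, identify $\mathcal{X}\smallsetminus\mathcal{F}$ with $\alpha\circledast\mathcal{F}^{\top}$, and invoke Theorem~\ref{inverse} with $\mathcal{A}=\{\{\alpha\}\}$ the unique $\MIF{1}$; you merely spell out the verifications ($\tr{\mathcal{F}}=k-1$ and the two inclusions) that the paper leaves implicit. The only cosmetic quibble is your justification that $\mathcal{F}\neq\emptyset$: rather than appealing to ``the existence of a $\CIF{k,k-1}$'' (which is what is being proved), it is cleaner to note that for $k\geq 2$ a block containing every $\alpha$-block would make $\{\alpha\}$ a blocking set of size $1<k=\tr{\mathcal{X}}$.
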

\begin{proof}
Let $\alpha \in P_{\mathcal{X}}$. Define $\mathcal{F}=\{B \in \mathcal{X}: \alpha \notin B\}$. Then $\mathcal{F}^{\top}=\{B\smallsetminus\{\alpha\}: \alpha\in B\in \mathcal{X}\}$. The conclusion follows as an application of Theorem~\ref{inverse}. 
\end{proof}

The following theorem is a sort of converse to Theorem~\ref{inverse}. Together, Theorem~\ref{inverse} and Theorem~\ref{embedding_theorem} show that closed intersecting families are the cores which may be used to obtain recursive construction of maximal intersecting families. 

\begin{theorem}\label{embedding_theorem}
Let $\mathcal{A}$ and $\mathcal{F}$ be a $\MIF{k-t}$ and a $\CIF{k,t}$ respectively where $\mathcal{A}$ and $\mathcal{F}$ have disjoint point sets. Then $\mathcal{F}\sqcup\mathcal{A}\circledast\mathcal{F}^{\top}$ is a $\MIF{k}$.    
\end{theorem}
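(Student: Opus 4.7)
The plan is to verify the three defining features of a $\MIF{k}$ for $\mathcal{G}:=\mathcal{F}\sqcup\mathcal{A}\circledast\mathcal{F}^{\top}$ in turn: uniformity with common block size $k$, the intersection property, and self-transversality $\mathcal{G}=\mathcal{G}^{\top}$. Uniformity is immediate, since each block of $\mathcal{F}$ has size $k$ and each block of $\mathcal{A}\circledast\mathcal{F}^{\top}$ has size $(k-t)+t=k$ thanks to $P_{\mathcal{A}}\cap P_{\mathcal{F}}=\emptyset$. The intersection property splits into three routine cases: two blocks in $\mathcal{F}$ meet because $\mathcal{F}$ is intersecting; two blocks $A_{1}\sqcup T_{1},A_{2}\sqcup T_{2}\in\mathcal{A}\circledast\mathcal{F}^{\top}$ meet because $\mathcal{A}$ is intersecting (so $A_{1}\cap A_{2}\neq\emptyset$); and a block $B\in\mathcal{F}$ meets any $A\sqcup T$ because $T\in\mathcal{F}^{\top}$ is a transversal of $\mathcal{F}$.

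Because $\mathcal{G}$ is an intersecting family of $k$-sets, every block of $\mathcal{G}$ is itself a blocking set of size $k$, so $\tr{\mathcal{G}}\leq k$. Thus it suffices to show that every blocking set $C$ of $\mathcal{G}$ with $|C|\leq k$ actually has $|C|=k$ and $C\in\mathcal{G}$; this will simultaneously give $\tr{\mathcal{G}}=k$ and $\mathcal{G}^{\top}\subseteq\mathcal{G}$, which together with $\mathcal{G}\subseteq\mathcal{G}^{\top}$ (every block is a transversal) yields $\mathcal{G}=\mathcal{G}^{\top}$. Without loss of generality $C\subseteq P_{\mathcal{F}}\cup P_{\mathcal{A}}$, so write $C=D\sqcup E$ with $D\subseteq P_{\mathcal{F}}$ and $E\subseteq P_{\mathcal{A}}$. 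Since $\mathcal{F}\subseteq\mathcal{G}$, the set $D$ is a blocking set of $\mathcal{F}$. The key observation, and the crux of the argument, is the following dichotomy extracted from the blocks $A\sqcup T\in\mathcal{A}\circledast\mathcal{F}^{\top}$: if some $A\in\mathcal{A}$ is disjoint from $E$, then $D$ must meet every $T\in\mathcal{F}^{\top}$, so $D$ is a blocking set of $\mathcal{F}^{\top}$; equivalently, either $E$ is a blocking set of $\mathcal{A}$ or $D$ is a blocking set of $\mathcal{F}^{\top}$.

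In the first subcase, $\mathcal{A}=\mathcal{A}^{\top}$ with $\uk{\mathcal{A}}=k-t$ forces $|E|\geq k-t$, while $D$ blocking $\mathcal{F}$ forces $|D|\geq\tr{\mathcal{F}}=t$. Hence $|C|\geq k$; equality throughout pins down $|D|=t$ and $|E|=k-t$, so $D\in\mathcal{F}^{\top}$ and $E\in\mathcal{A}^{\top}=\mathcal{A}$, giving $C=D\sqcup E\in\mathcal{A}\circledast\mathcal{F}^{\top}\subseteq\mathcal{G}$. In the second subcase, $D$ is simultaneously a blocking set of $\mathcal{F}$ and of $\mathcal{F}^{\top}$, hence a blocking set of $\mathcal{F}\sqcup\mathcal{F}^{\top}$; the closure property of $\mathcal{F}$ gives $\mathcal{F}=(\mathcal{F}\sqcup\mathcal{F}^{\top})^{\top}$, and since blocks of $\mathcal{F}$ have size $k$ this means $\tr{\mathcal{F}\sqcup\mathcal{F}^{\top}}=k$, so $|D|\geq k$. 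Combined with $|C|\leq k$ this forces $E=\emptyset$ and $D=C$, and then $D$ being a transversal of $\mathcal{F}\sqcup\mathcal{F}^{\top}$ yields $C\in\mathcal{F}\subseteq\mathcal{G}$.

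The main obstacle is isolating the dichotomy in the right form; everything else is bookkeeping. Once the split $C=D\sqcup E$ is set up, the two subcases exploit exactly the two hypotheses of the theorem—the MIF property of $\mathcal{A}$ in one subcase, and the closure property of $\mathcal{F}$ (via $\tr{\mathcal{F}\sqcup\mathcal{F}^{\top}}=k$) in the other—so the proof closes cleanly, mirroring Theorem~\ref{inverse} in the reverse direction.
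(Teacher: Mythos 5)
Your proof is correct and follows essentially the same route as the paper's: split a blocking set $C$ of size at most $k$ into its $P_{\mathcal{F}}$-part and $P_{\mathcal{A}}$-part, then use the closure property of $\mathcal{F}$ together with $\tr{\mathcal{A}}=k-t$ and $\tr{\mathcal{F}}=t$ to force $C$ into $\mathcal{F}$ or $\mathcal{A}\circledast\mathcal{F}^{\top}$ by counting. The only cosmetic difference is that you case-split on whether $E$ blocks $\mathcal{A}$ and invoke closure via $\tr{\mathcal{F}\sqcup\mathcal{F}^{\top}}=k$, whereas the paper case-splits on whether $C\in\mathcal{F}$ and invokes closure as the existence of a transversal of $\mathcal{F}$ disjoint from $C$; these are equivalent by Proposition~\ref{equivalentclosure}.
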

\begin{proof}
Let $C$ be a blocking $k-$set of $\mathcal{F}\sqcup\mathcal{A}\circledast\mathcal{F}^{\top}$. To prove $C\in\mathcal{F}\sqcup\mathcal{A}\circledast\mathcal{F}^{\top}$. If $C\in\mathcal{F}$ we are done. So assume $C\notin\mathcal{F}$. So by closure property of $\mathcal{F}$, $C$ is not a blocking set of $\mathcal{F}\sqcup\mathcal{F}^{\top}$. This implies $C$ is not a blocking set of $\mathcal{F}^{\top}$. Hence there exists at least one $T\in\mathcal{F}^{\top}$ which is disjoint from $C$. Since $C$ is a blocking set of $T\circledast\mathcal{A}$ it follows that $C\cap P_{\mathcal{A}}$ is a blocking set of $\mathcal{A}$. So $|C\cap P_{\mathcal{A}}|\geq k-t$. Also $C\cap P_{\mathcal{F}}$ is a blocking set of $\mathcal{F}$, hence $|C\cap P_{\mathcal{F}}|\geq t$. But $|C|=k$, so $|C\cap P_{\mathcal{A}}|=k-t$ and $|C\cap P_{\mathcal{F}}|=t$. Hence $C\cap P_{\mathcal{A}}\in\mathcal{A}$ and $C\cap P_{\mathcal{F}}\in\mathcal{F}^{\top}$. So $C\in\mathcal{A}\circledast\mathcal{F}^{\top}$. This shows that every blocking 
$k-$set of the family 
$\mathcal{F}\sqcup\mathcal{A}\circledast\mathcal{F}^{\top}$ is a block of that family. The converse is obvious.
\end{proof}

\begin{corollary}\label{c_embedding_theorem}
Let $\M{k}$ denote maximum number of blocks in a $\MIF{k}$. For each integer $k\geq t+1$, if $\mathcal{F}$ is a $\CIF{k,t}$ with $b$ blocks and $b^{\top}$ transversals, then  $\M{k}\geq b+b^{\top}\M{k-t}$.
\end{corollary}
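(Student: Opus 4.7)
The plan is to apply Theorem~\ref{embedding_theorem} directly, using an extremal $\MIF{k-t}$ as the ``outer'' family, and then do the block count.

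First I would fix $\mathcal{F}$, a $\CIF{k,t}$ with $b$ blocks and $b^{\top}$ transversals, and choose $\mathcal{A}$ to be a $\MIF{k-t}$ attaining the maximum, i.e.\ with exactly $\M{k-t}$ blocks; since the definition of $\M{k-t}$ only sees isomorphism types, I can relabel the point set of $\mathcal{A}$ so that $P_{\mathcal{A}}$ is disjoint from $P_{\mathcal{F}}$. With the hypotheses of Theorem~\ref{embedding_theorem} now in place, the family
\[
\mathcal{X} := \mathcal{F}\sqcup\mathcal{A}\circledast\mathcal{F}^{\top}
\]
is a $\MIF{k}$, so $\M{k}\geq |\mathcal{X}|$.

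Next I would count the blocks of $\mathcal{X}$. Since $\mathcal{F}$ and $\mathcal{A}\circledast\mathcal{F}^{\top}$ are disjoint families (any block of the second contains points of $P_{\mathcal{A}}$, whereas no block of $\mathcal{F}$ does), we have $|\mathcal{X}|=|\mathcal{F}|+|\mathcal{A}\circledast\mathcal{F}^{\top}|$. For the second summand, because $P_{\mathcal{A}}\cap P_{\mathcal{F}}=\emptyset$ the map $(A,T)\mapsto A\sqcup T$ from $\mathcal{A}\times\mathcal{F}^{\top}$ to $\mathcal{A}\circledast\mathcal{F}^{\top}$ is a bijection (one recovers $A=(A\sqcup T)\cap P_{\mathcal{A}}$ and $T=(A\sqcup T)\cap P_{\mathcal{F}}$), so $|\mathcal{A}\circledast\mathcal{F}^{\top}|=|\mathcal{A}|\cdot|\mathcal{F}^{\top}|=\M{k-t}\cdot b^{\top}$.

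Combining, $|\mathcal{X}|=b+b^{\top}\M{k-t}$, which gives $\M{k}\geq b+b^{\top}\M{k-t}$, as required. There is no real obstacle here; the content is already packaged in Theorem~\ref{embedding_theorem}, and the only thing to be careful about is the disjointness argument that makes the product $|\mathcal{A}|\cdot|\mathcal{F}^{\top}|$ legitimate.
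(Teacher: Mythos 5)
Your proposal is correct and follows exactly the paper's own argument: take an extremal $\MIF{k-t}$ with point set disjoint from $P_{\mathcal{F}}$, invoke Theorem~\ref{embedding_theorem}, and count $b+b^{\top}\M{k-t}$ blocks. The only difference is that you spell out the disjointness and the bijection $\mathcal{A}\times\mathcal{F}^{\top}\to\mathcal{A}\circledast\mathcal{F}^{\top}$ explicitly, which the paper leaves implicit.
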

\begin{proof}
We choose a $\MIF{k-t}$ with $\M{k-t}$ blocks so that its point set is disjoint from $P_{\mathcal{F}}$. Call it $\mathcal{A}$. The result follows since by Theorem~\ref{embedding_theorem}, $\mathcal{F}\sqcup\mathcal{A}\circledast\mathcal{F}^{\top}$ is a $\MIF{k}$ with $b+b^{\top}\M{k-t}$ blocks.
\end{proof}

\subsection{Construction of maximal intersecting families using closed intersecting families.}

\begin{proposition}\label{pointset}
Let $\mathcal{F}$ be a $\CIF{k,t}$. Suppose for each $i$, with $1\leq i\leq n$, $\mathcal{A}_{i}$ be a $\MIF{k-t}$ and $\mathcal{C}_{i}$ is a subfamily of $\mathcal{F}^{\top}$  with the following properties:
\begin{enumerate}[\normalfont(a)]
\item each $\mathcal{A}_{i}$ and $\mathcal{F}$ have disjoint point sets;
\item $\mathcal{F}^{\top}=\overset{n}{\underset{i=1}\sqcup}\mathcal{C}_{i}$;
\item each $t-$set of $\mathcal{C}_{i}$ is a blocking set of $\mathcal{F}^{\top}\smallsetminus\mathcal{C}_{i}$.
\end{enumerate}
Then $\mathcal{F}\sqcup(\overset{n}{\underset{i=1}\sqcup}\mathcal{A}_{i}\circledast\mathcal{C}_{i})$
is a $\MIF{k}$. Moreover, $n\leq\frac{1}{2}\binom{2t}{t}$. 
\end{proposition}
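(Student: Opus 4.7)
My plan is to handle the two assertions of the proposition in sequence. For the first, I would show that $\mathcal{G}:=\mathcal{F}\sqcup\bigsqcup_{i=1}^{n}\mathcal{A}_{i}\circledast\mathcal{C}_{i}$ is uniform at $k$ (immediate from $|A\sqcup T|=(k-t)+t$) and then intersecting, splitting into four subcases: blocks inside $\mathcal{F}$ intersect since $\mathcal{F}$ is a CIF; a block of $\mathcal{F}$ meets every $A\sqcup T\in\mathcal{A}_{i}\circledast\mathcal{C}_{i}$ through $T\in\mathcal{F}^{\top}$; two blocks of a single $\mathcal{A}_{i}\circledast\mathcal{C}_{i}$ meet through the intersecting $\mathcal{A}_{i}$; and two blocks from different classes $\mathcal{A}_{i}\circledast\mathcal{C}_{i}$, $\mathcal{A}_{j}\circledast\mathcal{C}_{j}$ meet in $P_{\mathcal{F}}$ by hypothesis~(c).

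For maximality I would adapt the argument of Theorem~\ref{embedding_theorem}. Given a blocking $k$-set $C\notin\mathcal{F}$, the closure property produces a $T\in\mathcal{F}^{\top}$ disjoint from $C$, say $T\in\mathcal{C}_{i}$. Forcing $C$ to block $T\circledast\mathcal{A}_{i}$ yields $|C\cap P_{\mathcal{A}_{i}}|\geq k-t$, and $|C\cap P_{\mathcal{F}}|\geq t$ because $C$ blocks $\mathcal{F}$; since $|C|=k$, both are equalities, so $C\cap P_{\mathcal{A}_{i}}\in\mathcal{A}_{i}$ and $C\cap P_{\mathcal{F}}\in\mathcal{F}^{\top}$. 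The novelty over Theorem~\ref{embedding_theorem} is pinpointing the correct class: $C\cap P_{\mathcal{F}}$ cannot lie in any $\mathcal{C}_{j}$ with $j\neq i$, for then~(c) would force it to meet $T\in\mathcal{F}^{\top}\smallsetminus\mathcal{C}_{j}$, contradicting $C\cap T=\emptyset$. Hence $C\in\mathcal{A}_{i}\circledast\mathcal{C}_{i}\subseteq\mathcal{G}$.

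For the cardinality bound I would apply the Bollob\'as set-pair inequality. The argument used in the proof of Theorem~\ref{inverse} shows that every $T\in\mathcal{F}^{\top}$ has a partner $T'\in\mathcal{F}^{\top}$ with $T\cap T'=\emptyset$, and~(c) forces this partner into the same class as $T$. After discarding any vacuously empty classes, I pick a disjoint pair $(T_{i},T_{i}')$ inside each $\mathcal{C}_{i}$. Condition~(c) then makes $T_{i}\cap T_{j}$, $T_{i}\cap T_{j}'$, $T_{i}'\cap T_{j}$ and $T_{i}'\cap T_{j}'$ all nonempty whenever $i\neq j$, while $T_{i}\cap T_{i}'=\emptyset$. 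The decisive step is that this symmetry lets me feed $2n$ ordered pairs---the list $(T_{i},T_{i}')$ augmented by its reversal $(T_{i}',T_{i})$ for $i=1,\ldots,n$---into the Bollob\'as system, since both the cross-intersection hypothesis and the diagonal condition remain satisfied. Bollob\'as then yields $2n\cdot\binom{2t}{t}^{-1}\leq 1$, i.e.\ $n\leq\tfrac{1}{2}\binom{2t}{t}$. The main obstacle is exactly this factor of $\tfrac{1}{2}$: the one-sided application of Bollob\'as to the $n$ pairs $(T_{i},T_{i}')$ alone gives only the weaker bound $n\leq\binom{2t}{t}$, and the symmetrisation trick above is what is needed to halve it.
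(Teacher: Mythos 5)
Your proof is correct and follows essentially the same route as the paper: the same splitting of a blocking $k$-set $C$ via the closure property and a disjoint $T\in\mathcal{C}_{i}$, and the same system of disjoint pairs $(T_{i},T_{i}')$ with pairwise cross-intersections for the bound on $n$. The only differences are that you make explicit two steps the paper leaves implicit --- the verification that $C\cap P_{\mathcal{F}}$ must lie in the same class $\mathcal{C}_{i}$ as $T$ (rather than merely in $\mathcal{F}^{\top}$), and the derivation of $n\leq\tfrac{1}{2}\binom{2t}{t}$ from the Bollob\'as inequality via the doubling trick, where the paper simply cites the corresponding exercise in Lov\'asz --- both of which are sound.
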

\begin{proof}
Let $\mathcal{G}:=\mathcal{F}\sqcup(\overset{n}{\underset{i=1}\sqcup}\mathcal{A}_{i}\circledast\mathcal{C}_{i})$. Clearly it is an intersecting family of $k-$sets. Let $C$ be a blocking set of $\mathcal{G}$ with size at most $k$. To prove $C$ is a block of it. If $C\in\mathcal{F}$ we are done. So assume $C\notin\mathcal{F}$. By closure property of $\mathcal{F}$ there exists at least one $T\in\mathcal{F}^{\top}$ such that $C\cap P_{\mathcal{F}}$ is disjoint from $T$ and $T\in\mathcal{C}_{i}$ for a unique $i$. Since $C$ is a blocking set of $T\circledast\mathcal{A}_{i}$ hence $|C\cap P_{\mathcal{A}_{i}}|\geq k-t$. Also $C\cap P_{\mathcal{F}}$ is a blocking set of $\mathcal{F}$ hence 
$|C\cap P_{\mathcal{F}}|\geq t$. This implies $|C\cap P_{\mathcal{A}_{i}}|=k-t$ and $|C\cap P_{\mathcal{F}}|=t$ hence $C\in\mathcal{A}_{i}\circledast\mathcal{C}_{i}$.

For the next part, by assumption (c) we  observe that, for each $i$ with $1\leq i\leq n$ there exists at least one pair $(T_{i},T^{'}_{i})$, where $T_{i}$, $T^{'}_{i}\in\mathcal{C}_{i}$ with $T_{i}\cap T^{'}_{i}=\emptyset$. Also for each $i$, $j$ and chosen such pairs $(T_{i}, T^{'}_{i})$ and $(T_{j}, T^{'}_{j})$, with $1\leq i<j\leq n$, we have $T_{i}\cap T^{'}_{j}\neq\emptyset$ and $T^{'}_{i}\cap T_{j}\neq\emptyset$. Hence by using \cite[\textsection~13, Problem~32]{MR537284}, we have $n\leq\frac{1}{2}\binom{2t}{t}$.
\end{proof}

The proof of Theorem~\ref{embedding_theorem} is an application of Proposition~\ref{pointset}. We observe that it is the case $n=1$ in Proposition~\ref{pointset}.

\begin{proposition}\label{affine}
Let $\mathcal{F}$ be a $\CIF{k,k-n}$. Suppose $\mathcal{F}^{\top}=\overset{n+1}{\underset{i=1}\sqcup}\mathcal{C}_{i}$, where for each $i$, with $1\leq i\leq n+1$, the subfamily $\mathcal{C}_{i}$ satisfies following properties:
\begin{enumerate}[\normalfont(a)]
\item each $\mathcal{C}_{i}$ is an intersecting family of $(k-n)-$sets;
\item whenever $i\neq j$, then for each $T\in \mathcal{C}_{i}$ there exists at least one $T^{'}\in\mathcal{C}_{j}$ such that 
$T\cap T^{'}=\emptyset$.
\end{enumerate}
Consider $\mathcal{A}_{1}$, $\mathcal{A}_{2}$, $\ldots$, $\mathcal{A}_{n+1}$ are $(n+1)-$parallel classes of an affine plane of order $n$ (provided it exists). This affine plane of order $n$ and $\mathcal{F}$ have disjoint point sets. Then $\mathcal{F}\sqcup(\overset{n+1}{\underset{i=1}\sqcup}\mathcal{A}_{i}\circledast\mathcal{C}_{i})$ is a $\MIF{k}$.
\end{proposition}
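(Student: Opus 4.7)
The plan is to follow the overall strategy of Proposition~\ref{pointset}, replacing its final step by an affine-geometric argument since here each $\mathcal{A}_{i}$ is a parallel class rather than a $\MIF{n}$. Write $\mathcal{G} := \mathcal{F}\sqcup(\bigsqcup_{i=1}^{n+1}\mathcal{A}_{i}\circledast\mathcal{C}_{i})$ and $t = k-n$. First I would check that $\mathcal{G}$ is an intersecting family of $k$-sets: two blocks of $\mathcal{F}$ meet because $\mathcal{F}$ is intersecting; $B\in\mathcal{F}$ meets any $A\sqcup T \in \mathcal{A}_{i}\circledast\mathcal{C}_{i}$ through $T\in\mathcal{F}^{\top}$; and two blocks $A_{1}\sqcup T_{1}\in\mathcal{A}_{i}\circledast\mathcal{C}_{i}$, $A_{2}\sqcup T_{2}\in\mathcal{A}_{j}\circledast\mathcal{C}_{j}$ meet either through $T_{1}\cap T_{2}\neq\emptyset$ (when $i=j$, by property (a)) or through $A_{1}\cap A_{2}\neq\emptyset$ (when $i\neq j$, since lines of distinct parallel classes of an affine plane meet in exactly one point).

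For maximality, let $C$ be a blocking $k$-set of $\mathcal{G}$ and assume $C\notin\mathcal{F}$. By the closure property of $\mathcal{F}$ there is some $T_{0}\in\mathcal{F}^{\top}$ with $T_{0}\cap C=\emptyset$; let $T_{0}\in\mathcal{C}_{i_{0}}$. Each block $A\sqcup T_{0}$ with $A\in\mathcal{A}_{i_{0}}$ must meet $C$, forcing $C\cap A\neq\emptyset$ for every $A\in\mathcal{A}_{i_{0}}$; since $\mathcal{A}_{i_{0}}$ partitions $P_{\mathcal{A}}$ into $n$ disjoint lines of size $n$, this yields $|C\cap P_{\mathcal{A}}|\geq n$. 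Combined with $|C\cap P_{\mathcal{F}}|\geq t$ (as $C\cap P_{\mathcal{F}}$ blocks $\mathcal{F}$), with $|C|=k=t+n$, and with the disjointness of $P_{\mathcal{F}}$ and $P_{\mathcal{A}}$, both inequalities must be equalities and $C\subseteq P_{\mathcal{F}}\sqcup P_{\mathcal{A}}$. Consequently $C\cap P_{\mathcal{F}}\in\mathcal{F}^{\top}$, so it lies in a unique $\mathcal{C}_{i_{1}}$. Invoking property (b), for each $j\neq i_{1}$ there exists $T_{j}\in\mathcal{C}_{j}$ disjoint from $C\cap P_{\mathcal{F}}$ (and hence from $C$), and repeating the previous argument with $T_{j}$ shows that $C$ meets every line of $\mathcal{A}_{j}$ for each $j\neq i_{1}$.

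The main obstacle is the concluding geometric step: I must show that the set $S := C\cap P_{\mathcal{A}}$, of size exactly $n$, which meets every line of each of the $n$ parallel classes $\mathcal{A}_{j}$ with $j\neq i_{1}$, is forced to be itself a line of the remaining class $\mathcal{A}_{i_{1}}$. Since $|S|=n$ and each such $\mathcal{A}_{j}$ consists of $n$ disjoint lines of size $n$, $S$ meets each of those lines in exactly one point. Now pick any two distinct $p,q\in S$; the unique affine line through $p$ and $q$ belongs to some class $\mathcal{A}_{\ell}$, and if $\ell\neq i_{1}$ then $S$ would meet that line in at least two points, contradicting the transversal property. So every pair of points of $S$ is collinear in $\mathcal{A}_{i_{1}}$, and since distinct lines of $\mathcal{A}_{i_{1}}$ are pairwise disjoint, all $n$ points of $S$ lie on a single line $L\in\mathcal{A}_{i_{1}}$; comparing sizes gives $S=L$. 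Therefore $C = S\sqcup(C\cap P_{\mathcal{F}}) \in \mathcal{A}_{i_{1}}\circledast\mathcal{C}_{i_{1}} \subseteq \mathcal{G}$, completing the proof that $\mathcal{G}$ is a $\MIF{k}$.
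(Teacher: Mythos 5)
Your proof is correct and follows essentially the same route as the paper's: the closure property of $\mathcal{F}$ produces a transversal disjoint from $C$, the counting forces $|C\cap P_{\mathcal{A}}|=n$ and $C\cap P_{\mathcal{F}}\in\mathcal{C}_{i_{1}}$, and property (b) then makes $C\cap P_{\mathcal{A}}$ a transversal of every parallel class other than $\mathcal{A}_{i_{1}}$. The only difference is that you spell out the concluding geometric step (that an $n$-set meeting every line of the other $n$ parallel classes must itself be a line of $\mathcal{A}_{i_{1}}$), which the paper asserts without proof.
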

\begin{proof}
Let $\mathcal{G}:=\mathcal{F}\sqcup(\overset{n+1}{\underset{i=1}\sqcup}\mathcal{A}_{i}\circledast\mathcal{C}_{i})$. Clearly it is an intersecting family of $k-$sets. Let $P$ be the point set of this affine plane. Let $C$ be a blocking set of $\mathcal{G}$ with size at most $k$. To prove $C$ is a block of it. If $C\in\mathcal{F}$ we are done. So assume $C\notin\mathcal{F}$. By closure property of $\mathcal{F}$ there exists at least one $T\in\mathcal{F}^{\top}$ such that $C\cap P_{\mathcal{F}}$ is disjoint from $T$. Then there exists at least one $i$, with $1\leq i\leq n+1$, such that $T\in\mathcal{C}_{i}$. But $C$ is a blocking set of $T\circledast\mathcal{A}_{i}$ hence $|C\cap P|\geq n$. Also $C\cap P_{\mathcal{F}}$ is a blocking set of $\mathcal{F}$ hence 
$|C\cap P_{\mathcal{F}}|\geq k-n$. This implies $|C\cap P|=n$ and $|C\cap P_{\mathcal{F}}|=k-n$ and hence 
$C\cap P_{\mathcal{F}}\in\mathcal{F}^{\top}$. So by assumptions (a) and (b), $C\cap P_{\mathcal{F}}\in\mathcal{C}_{j}$ for some $j\neq i$. Then again by assumption (b) there exists at least one $T_{l}\in\mathcal{C}_{l}$ such that 
$C\cap P_{\mathcal{F}}$ is disjoint from $T_{l}$, for each $l$ with $l\neq j$. So $C\cap P$ is a blocking set of each such $T_{l}\circledast\mathcal{A}_{l}$. Hence $C\cap P$ is a line of $\mathcal{A}_{j}$. Therefore $C\in\mathcal{A}_{j}\circledast\mathcal{C}_{j}$.
\end{proof}

\subsection{Recursive Construction of closed intersecting families.}

\begin{theorem}\label{proiliferation}
Let $\mathcal{A}$ be a $\MIF{l}$ and let $\mathcal{F}_{x}$, $x\in P_{\mathcal{A}}$, be uniform families with pairwise disjoint point sets. Suppose $\uk{\mathcal{F}_{x}}=k$ and $\tr{\mathcal{F}_{x}}=t$ for all $x$. Put 
\begin{equation*}
\mathcal{G}=\left\{\underset{x\in A}{\sqcup}F_{x}: A\in\mathcal{A}, F_{x}\in\mathcal{F}_{x}\textup{ for all } x\in A\right\}.
\end{equation*}
Then we have the following.
\begin{enumerate}[\normalfont(a)]
\item $\mathcal{G}^{\top}=\left\{\underset{x\in A}{\sqcup}T_{x}: A\in\mathcal{A}, T_{x}\in\mathcal{F}^{\top}_{x}\textup{ for all } x\in A\right\}$. In particular, $\uk{\mathcal{G}}=kl$ and  $\tr{\mathcal{G}}=tl$.
\item If, further, each $\mathcal{F}_{x}$ is a $\CIF{k,t}$ with $\tr{\mathcal{F}^{\top}_{x}}=k$, then $\mathcal{G}$ is a $\CIF{kl,tl}$. 
\end{enumerate}
\end{theorem}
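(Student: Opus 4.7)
\emph{Part (a).} The inclusion ``$\supseteq$'' is routine: for $T = \bigsqcup_{x \in A} T_x$ with $A \in \mathcal{A}$, $T_x \in \mathcal{F}_x^{\top}$ and an arbitrary block $\bigsqcup_{y \in A'} F_y \in \mathcal{G}$, intersectingness of $\mathcal{A}$ yields some $z \in A \cap A'$, and $T_z \in \mathcal{F}_z^{\top}$ forces $T_z \cap F_z \neq \emptyset$; so $T$ is a blocking set of $\mathcal{G}$ of size $tl$. For the reverse, take any blocking set $C$ of $\mathcal{G}$ and put $A^{*} := \{x \in P_{\mathcal{A}} : C \cap P_{\mathcal{F}_x}$ is a blocking set of $\mathcal{F}_x\}$. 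If some $A \in \mathcal{A}$ were disjoint from $A^{*}$, one could pick $F_x \in \mathcal{F}_x$ disjoint from $C$ for each $x \in A$ and assemble a block of $\mathcal{G}$ missing $C$ --- contradiction. Hence $A^{*}$ is a blocking set of $\mathcal{A}$, which gives $|A^{*}| \geq l$ and $|C| \geq t|A^{*}| \geq tl$. Equality forces $|A^{*}| = l$ (so $A^{*} \in \mathcal{A}$ by maximality), $|C \cap P_{\mathcal{F}_x}| = t$ with $C \cap P_{\mathcal{F}_x} \in \mathcal{F}_x^{\top}$ for each $x \in A^{*}$, and $C = \bigsqcup_{x \in A^{*}} (C \cap P_{\mathcal{F}_x})$. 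This establishes the formula for $\mathcal{G}^{\top}$ and $\tr{\mathcal{G}} = tl$; $\uk{\mathcal{G}} = kl$ is immediate from the construction.

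\emph{Part (b), structural step.} Since $tl \leq kl - 1$, by Proposition~\ref{equivalentclosure}\eqref{descriptional} it suffices to show that every blocking $kl$-set $C$ of $\mathcal{G} \sqcup \mathcal{G}^{\top}$ lies in $\mathcal{G}$. Let $B := \{x \in P_{\mathcal{A}} : C \cap P_{\mathcal{F}_x}$ is a blocking set of $\mathcal{F}_x^{\top}\}$. The hypothesis $\tr{\mathcal{F}_x^{\top}} = k$ forces $|C \cap P_{\mathcal{F}_x}| \geq k$ for every $x \in B$, so $|B| \leq l$. Conversely, if some $A \in \mathcal{A}$ were disjoint from $B$, I could pick $T_x \in \mathcal{F}_x^{\top}$ disjoint from $C \cap P_{\mathcal{F}_x}$ for each $x \in A$ and obtain, by (a), an element of $\mathcal{G}^{\top}$ disjoint from $C$ --- contradicting that $C$ blocks $\mathcal{G}^{\top}$. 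Hence $B$ is a blocking set of $\mathcal{A}$, whence $|B| = l$ and $B \in \mathcal{A}$; counting then gives $|C \cap P_{\mathcal{F}_x}| = k$ for every $x \in B$ and $C = \bigsqcup_{x \in B} C_x$ with $C_x := C \cap P_{\mathcal{F}_x}$.

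\emph{Part (b), closure of the pieces; main obstacle.} It remains to push each $C_x$ into $\mathcal{F}_x$. Set $S := \{x \in B : C_x$ is not a blocking set of $\mathcal{F}_x\}$. If some $A \in \mathcal{A}$ satisfied $A \cap B \subseteq S$, I could choose $F_x \in \mathcal{F}_x$ disjoint from $C_x$ for $x \in A \cap B$ and any $F_x \in \mathcal{F}_x$ for $x \in A \setminus B$ (which is disjoint from $C$ since $C \subseteq \bigcup_{y \in B} P_{\mathcal{F}_y}$), obtaining a block of $\mathcal{G}$ missing $C$ --- contradiction. Hence $B \setminus S$ is a blocking set of $\mathcal{A}$ of size $\leq l$, so it must equal $B$, and $S = \emptyset$. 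Each $C_x$ is then a blocking $k$-set of both $\mathcal{F}_x$ and $\mathcal{F}_x^{\top}$, so by the closure property of $\mathcal{F}_x$ one has $C_x \in \mathcal{F}_x$, giving $C \in \mathcal{G}$. I expect this last step to be the delicate one: after part (a) carves $C$ into the pieces $C_x$, one must re-invoke the maximality of $\mathcal{A}$ together with the extra hypothesis $\tr{\mathcal{F}_x^{\top}} = k$ to upgrade each local blocking condition to full membership in $\mathcal{F}_x$ via its closure property --- which is precisely why the hypothesis on $\tr{\mathcal{F}_x^{\top}}$ appears separately in (b).
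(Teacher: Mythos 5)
Your argument is correct. Part (a) is essentially the paper's own proof: both arguments establish $\tr{\mathcal{G}}\leq tl$ via the obvious blocking sets and then, given a small blocking set $C$, extract a subset of $P_{\mathcal{A}}$ (your $A^{*}$, the paper's $\{x:|C\cap P_{\mathcal{F}_{x}}|\geq t\}$) which must be a block of $\mathcal{A}$, on pain of assembling a block of $\mathcal{G}$ disjoint from $C$; the counting then pins everything down. In part (b), however, you take a genuinely different route. The paper first applies part (a) to the families $\mathcal{F}^{\top}_{x}$ to compute $\mathcal{G}^{\top\top}$ explicitly, deduces that any transversal $C$ of $\mathcal{G}\sqcup\mathcal{G}^{\top}$ already has the product form $\underset{x\in A}{\sqcup}S_{x}$ with $S_{x}\in\mathcal{F}^{\top\top}_{x}$, and then kills a single bad coordinate $y$ by choosing $B\in\mathcal{A}$ with $A\cap B=\{y\}$. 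You bypass $\mathcal{G}^{\top\top}$ entirely: you reduce to condition \eqref{descriptional} of Proposition~\ref{equivalentclosure} and run the part-(a) decomposition argument twice on the blocking $kl$-set $C$ --- once against $\mathcal{G}^{\top}$ (using $\tr{\mathcal{F}^{\top}_{x}}=k$ to force $|C_{x}|\geq k$) to carve $C$ into pieces indexed by a block of $\mathcal{A}$, and once against $\mathcal{G}$ to show every piece blocks its $\mathcal{F}_{x}$, handling all bad coordinates simultaneously via the observation that $B\smallsetminus S$ is a blocking set of $\mathcal{A}$ of size at most $l$. Both proofs invoke the hypothesis $\tr{\mathcal{F}^{\top}_{x}}=k$ and the closure of each $\mathcal{F}_{x}$ at the analogous moment, so the gain is organisational rather than logical: your version avoids stating and proving the formula for $\mathcal{G}^{\top\top}$, at the cost of repeating the decomposition argument; the paper's version records the reusable fact that transversal families compose. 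Two small points you should make explicit before citing Proposition~\ref{equivalentclosure}: that $\mathcal{G}$ is an intersecting family of $kl$-sets (immediate since $\mathcal{A}$ and each $\mathcal{F}_{x}$ are intersecting), and that $tl\leq kl-1$ follows from $t\leq k-1$, which is part of the definition of a $\CIF{k,t}$.
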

\begin{proof}
If $A\in\mathcal{A}$ and $T_{x}\in\mathcal{F}^{\top}_{x}$ for all $x\in A$, then clearly $\underset{x\in A}{\sqcup}T_{x}$ is a blocking set of $\mathcal{G}$ of size $tl$. Thus, $\tr{\mathcal{G}}\leq tl$. Let $B$ be a blocking set of $\mathcal{G}$ of size at most $tl$. For $x\in P_{\mathcal{A}}$, put $T_{x}= B\cap P_{\mathcal{F}_{x}}$. Let $A=\{x\in P_{\mathcal{A}}: |T_{x}|\geq t\}$. We have
\begin{equation}\label{PHP_equation}
\underset{x\in P_{\mathcal{A}}}{\sum}|T_{x}|=|B|\leq tl
\end{equation}
and hence $|A|\leq l$. If $A$ is not a block of the $\MIF{l}$ $\mathcal{A}$, then there is a block $A^{'}$ of $\mathcal{A}$ disjoint from $A$. Hence $|T_{x}|\leq t-1$ for all $x\in A^{'}$. So, for each $x\in A^{'}$ there is a block $F_{x}$ of $\mathcal{F}_{x}$ disjoint from $T_{x}$. Hence $\underset{x\in A^{'}}{\sqcup}F_{x}$ is a block of $\mathcal{G}$ disjoint from $B$, a contradiction. So $A\in\mathcal{A}$ and $|A|=l$. Then \eqref{PHP_equation} implies that $|T_{x}|=\emptyset$ for $x\notin A$ and $|T_{x}|=t$ for $x\in A$. Thus, $|B|=tl$, so that $\tr{\mathcal{G}}=tl$ and $B\in\mathcal{G}^{\top}$. Since $B=\underset{x\in A}{\sqcup}T_{x}\in\mathcal{G}^{\top}$ and $|T_{x}|=t$, it follows that $T_{x}\in\mathcal{F}^{\top}_{x}$ for all $x\in A$. This proves part (a).

Now we assume each $\mathcal{F}_{x}$ is a $\CIF{k,t}$. Since $\mathcal{A}$, as well as each $\mathcal{F}_{x}$, is an intersecting family it follows that $\mathcal{G}$ is an intersecting family. Using the description of $\mathcal{G}^{\top}$ from part (a) and applying part (a) to the families $\mathcal{F}^{\top}_{x}$, $x\in P_{\mathcal{A}}$, we see that 
\begin{equation*}
\mathcal{G}^{\top\top}=\left\{\underset{x\in A}{\sqcup}S_{x}: A\in\mathcal{A}, S_{x}\in\mathcal{F}^{\top\top}_{x}\textup{ for all } x\in A\right\}.
\end{equation*}
Thus $\tr{\mathcal{G}\sqcup\mathcal{G}^{\top}}\geq\tr{\mathcal{G}^{\top}}=kl$. Since all the blocks of $\mathcal{G}$ are blocking sets of $\mathcal{G}\sqcup\mathcal{G}^{\top}$ of size $kl$, it follows that $\tr{\mathcal{G}\sqcup\mathcal{G}^{\top}}=kl$ and $\mathcal{G}\subseteq(\mathcal{G}\sqcup\mathcal{G}^{\top})^{\top}$. Let $C$ be a transversal of $\mathcal{G}\sqcup\mathcal{G}^{\top}$. Then $C\in\mathcal{G}^{\top\top}$ and hence $C=\underset{x\in A}{\sqcup}S_{x}$, for some $A\in\mathcal{A}$ and $S_{x}\in\mathcal{F}^{\top\top}_{x}$ for all $x\in A$. If we can show that $C\in\mathcal{G}$, then we are done. Otherwise, there exists at least one $y\in A$ such that $S_{y}\notin\mathcal{F}_{y}=(\mathcal{F}_{y}\sqcup\mathcal{F}^{\top}_{y})^{\top}$. Since $S_{y}\in\mathcal{F}^{\top\top}_{y}$ it follows that $S_{y}$ is not a blocking set of $\mathcal{F}_{y}$. So there exists at least one $U_{y}\in\mathcal{F}_{y}$ disjoint from $S_{y}$. Since $y\in A\in\mathcal{A}$ and $\mathcal{A}$ is a $\MIF{l}$, there is a  
$B\in\mathcal{A}$ such that $A\cap B=\{y\}$. For each $x\in B\smallsetminus\{y\}$, choose arbitrary $U_{x}\in\mathcal{F}_{x}$. Then $\underset{x\in B}{\sqcup}U_{x}$ is a block of $\mathcal{G}$ disjoint from the blocking set $C$, a contradiction. Thus $C\in\mathcal{G}$. Hence $(\mathcal{G}\sqcup\mathcal{G}^{\top})^{\top}\subseteq\mathcal{G}$. Therefore $\mathcal{G}=(\mathcal{G}\sqcup\mathcal{G}^{\top})^{\top}$ and this proves part (b).
\end{proof}

\begin{theorem}\label{extension&embeding}
Let $\mathcal{F}$ and $\mathcal{G}$ be two uniform families with disjoint point sets. Let $\uk{\mathcal{F}}=k$, $\uk{\mathcal{G}}=k+t$, $\tr{\mathcal{F}}=t^{'}$ and $\tr{\mathcal{G}}=t$. Suppose $\tr{\mathcal{G}^{\top}}>t+t^{'}$. Let $\mathcal{H}=\mathcal{G}\sqcup(\mathcal{F}\circledast\mathcal{G}^{\top})$. Then,
\begin{enumerate}[\normalfont(a)]
\item $\mathcal{H}^{\top}=\mathcal{F}^{\top}\circledast\mathcal{G}^{\top}$. In particular $\uk{\mathcal{H}}=k+t$, $\tr{\mathcal{H}}=t+t^{'}$.
\item If, further, both $\mathcal{F}$ and $\mathcal{G}$ are closed intersecting families, then $\mathcal{H}$ is a closed intersecting family.
\end{enumerate}
\end{theorem}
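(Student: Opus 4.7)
The plan for part (a) is to prove both inclusions in the identity $\mathcal{H}^{\top}=\mathcal{F}^{\top}\circledast\mathcal{G}^{\top}$. The inclusion $\supseteq$ is straightforward: for any $F\in\mathcal{F}^{\top}$ and $T\in\mathcal{G}^{\top}$, the set $F\sqcup T$ has size $t+t'$, meets every block of $\mathcal{G}$ through $T$, and meets every block of $\mathcal{F}\circledast\mathcal{G}^{\top}$ through $F$. For the reverse, I would take an arbitrary blocking set $C$ of $\mathcal{H}$ with $|C|\leq t+t'$ and split $C=C_F\sqcup C_G$, where $C_F:=C\cap P_{\mathcal{F}}$ and $C_G:=C\cap P_{\mathcal{G}}$. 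Since $C$ blocks $\mathcal{G}$ we have $|C_G|\geq t$. The key dichotomy derived from $C$ blocking $\mathcal{F}\circledast\mathcal{G}^{\top}$ is: either $C_F$ blocks $\mathcal{F}$, or there exists some $F_0\in\mathcal{F}$ with $C_F\cap F_0=\emptyset$, in which case $C_G$ is forced to block $\mathcal{G}^{\top}$. The second alternative would give $|C_G|\geq\tr{\mathcal{G}^{\top}}>t+t'$, contradicting $|C|\leq t+t'$. So $|C_F|\geq t'$, and the size bounds force equality throughout, yielding $C_F\in\mathcal{F}^{\top}$, $C_G\in\mathcal{G}^{\top}$, as desired.

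For part (b) I would first note the preparatory observations that $\mathcal{H}$ is an intersecting family (two blocks of $\mathcal{G}$ meet by intersectivity of $\mathcal{G}$; two blocks of $\mathcal{F}\circledast\mathcal{G}^{\top}$ meet through their $\mathcal{F}$-components; a mixed pair meets through the $\mathcal{G}^{\top}$-transversal hitting the $\mathcal{G}$-block), and that $\tr{\mathcal{H}}=t+t'\leq(k-1)+t=\uk{\mathcal{H}}-1$, using $\tr{\mathcal{F}}\leq k-1$ from $\mathcal{F}$ being a CIF. The remaining closure property will be verified via characterisation (a) of Proposition \ref{equivalentclosure}: I show that every blocking $(k+t)$-set $C$ of $\mathcal{H}\sqcup\mathcal{H}^{\top}$ is a block of $\mathcal{H}$.

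Given such a $C$, write $C=C_F\sqcup C_G$ and case-split on whether $C_G$ blocks $\mathcal{G}^{\top}$. If it does, then $C_G$ also blocks $\mathcal{G}$, so $C_G$ blocks $\mathcal{G}\sqcup\mathcal{G}^{\top}$; the CIF property of $\mathcal{G}$ gives $\tr{\mathcal{G}\sqcup\mathcal{G}^{\top}}=\uk{\mathcal{G}}=k+t$, which forces $|C_G|=k+t=|C|$, so $C=C_G\in\mathcal{G}\subseteq\mathcal{H}$. If $C_G$ does not block $\mathcal{G}^{\top}$, then the argument of part (a) applied to $C$ blocking $\mathcal{F}\circledast\mathcal{G}^{\top}$ shows $C_F$ blocks $\mathcal{F}$, and the parallel argument applied to $C$ blocking $\mathcal{H}^{\top}=\mathcal{F}^{\top}\circledast\mathcal{G}^{\top}$ shows $C_F$ blocks $\mathcal{F}^{\top}$. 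Thus $C_F$ blocks $\mathcal{F}\sqcup\mathcal{F}^{\top}$, and the CIF property of $\mathcal{F}$ yields $|C_F|\geq k$, with $C_F\in\mathcal{F}$ when equality holds. Combined with $|C_G|\geq t$ and $|C|=k+t$, this forces $|C_F|=k$ and $|C_G|=t$, so $C_F\in\mathcal{F}$ and $C_G\in\mathcal{G}^{\top}$, giving $C\in\mathcal{F}\circledast\mathcal{G}^{\top}\subseteq\mathcal{H}$.

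The only delicate point is the sharp use of the hypothesis $\tr{\mathcal{G}^{\top}}>t+t'$, which enters once in part (a) to rule out a small $C_G$ that blocks $\mathcal{G}^{\top}$, and re-enters in Case B of part (b) through reuse of the same dichotomy. I do not foresee any other substantive obstacle; the proof is essentially a bookkeeping exercise combining the closure properties of $\mathcal{F}$ and $\mathcal{G}$ with the size identity $|C|=k+t=\uk{\mathcal{F}}+\tr{\mathcal{G}}$.
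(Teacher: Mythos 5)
Your proposal is correct and follows essentially the same route as the paper: for (a) the dichotomy ``either $C\cap P_{\mathcal{F}}$ blocks $\mathcal{F}$ or a block of $\mathcal{F}\circledast\mathcal{G}^{\top}$ avoids $C$ via $\tr{\mathcal{G}^{\top}}>t+t'$'' is exactly the paper's argument, and for (b) the key step of producing $T\in\mathcal{G}^{\top}$ disjoint from $C$ and deducing that $C\cap P_{\mathcal{F}}$ blocks $\mathcal{F}\sqcup\mathcal{F}^{\top}$ (hence has size $\geq k$) is the paper's as well, with your case split on whether $C\cap P_{\mathcal{G}}$ blocks $\mathcal{G}^{\top}$ being only a cosmetic reorganisation of the paper's split on whether $C\in\mathcal{G}$. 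Your explicit check that $\tr{\mathcal{H}}\leq\uk{\mathcal{H}}-1$ is a small point the paper leaves implicit, but it changes nothing substantive.
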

\begin{proof}
Since every member of $\mathcal{F}^{\top}\circledast\mathcal{G}^{\top}$ is a blocking set of $\mathcal{H}$ of size $t+t^{'}$, $\tr{\mathcal{H}}\leq t+t^{'}$. Let $C$ be a transversal of $\mathcal{H}$. Then $|C|\leq t+t^{'}$. If $C\cap P_{\mathcal{F}}$ is not a blocking set of $\mathcal{F}$, then there is a block $A\in\mathcal{F}$ disjoint from $C\cap P_{\mathcal{F}}$. Since $|C|\leq t+t^{'}\leq\tr{\mathcal{G}^{\top}}-1$, there is a $B\in\mathcal{G}^{\top}$ disjoint from $C\cap P_{\mathcal{G}}$. Then $A\sqcup B\in\mathcal{H}$ is disjoint from the blocking set $C$, a contradiction. Thus, $C\cap P_{\mathcal{F}}$ is a blocking set of $\mathcal{F}$. Clearly $C\cap P_{\mathcal{G}}$ is a blocking set of $\mathcal{G}$. Therefore $|C\cap P_{\mathcal{F}}|\geq t^{'}$ and $|C\cap P_{\mathcal{G}}|\geq t$. Since $|C|\leq t+t^{'}$, it follows that $|C\cap P_{\mathcal{F}}|=t^{'}$ and $|C\cap P_{\mathcal{G}}|=t$. Therefore
$C\cap P_{\mathcal{F}}\in\mathcal{F}^{\top}$ and $C\cap P_{\mathcal{G}}\in\mathcal{G}^{\top}$. Thus $C\in\mathcal{F}^{\top}\circledast\mathcal{G}^{\top}$. This proves part (a).

Now suppose $\mathcal{F}$ and $\mathcal{G}$ are closed intersecting families. In particular they are intersecting families. Hence $\mathcal{H}$ is an intersecting family. Thus the blocks of $\mathcal{H}$ are blocking sets of $\mathcal{H}\sqcup\mathcal{H}^{\top}$ of size $k+t$. So $\tr{\mathcal{H}\sqcup\mathcal{H}^{\top}}\leq k+t$. Let $C$ be a transversal of $\mathcal{H}\sqcup\mathcal{H}^{\top}$. Thus $|C|\leq k+t$. If we can show that $C\in\mathcal{H}$, then $\mathcal{H}$ is a closed intersecting family and we are done. If $C\in\mathcal{G}$ we are done. So suppose $C\notin\mathcal{G}$. But $C$ is a blocking set of $\mathcal{G}$. Since $\mathcal{G}$ is a closed intersecting family, it follows that there is a $T\in\mathcal{G}^{\top}$ disjoint from $C$. Since $C$ is a blocking set of $\mathcal{F}\circledast\mathcal{G}^{\top}\subseteq\mathcal{H}$ and also of $\mathcal{F}^{\top}\circledast\mathcal{G}^{\top}=\mathcal{H}^{\top}$, it follows that $C\cap P_{\mathcal{F}}$ is a blocking set of 
$\mathcal{F}\sqcup\mathcal{F}^{\top}$. Since $\mathcal{F}$ is a closed intersecting family with $\uk{\mathcal{F}}=k$, we get $|C\cap P_{\mathcal{F}}|\geq k$. Also, as $C\cap P_{\mathcal{G}}$ is a blocking set of $\mathcal{G}$ and $\tr{\mathcal{G}}=t$, $|C\cap P_{\mathcal{G}}|\geq t$. But $|C|\leq k+t$. Therefore, $|C|=k+t$, 
$C\cap P_{\mathcal{F}}\in\mathcal{F}$, $C\cap P_{\mathcal{G}}\in\mathcal{G}^{\top}$. Consequently,  $C\in\mathcal{F}\circledast\mathcal{G}^{\top}\subseteq\mathcal{H}$. Hence $C\in\mathcal{H}$.  This proves part (b).   
\end{proof}

\section{Constructions over the Cycle Graph}\label{circular_constructions}

\begin{construction}\label{circular_construction}
Let $k,t$ be positive integers with $t\leq k$. Let $X_{n}$, $0\leq n\leq t-1$, be $t$ pairwise disjoint sets with  
\begin{align*}
|X_{n}|=\left\{\begin{array}{lcr}
                k-\lfloor\frac{t}{2}\rfloor & \textnormal{if} & 0\leq n\leq\lfloor\frac{t-1}{2}\rfloor\\
                k-\lfloor\frac{t-1}{2}\rfloor & \textnormal{if} & \lfloor\frac{t-1}{2}\rfloor+1\leq n\leq t-1
            \end{array}\right.
\end{align*}
say $X_{n}=\{x^{n}_{p}:0\leq p\leq |X_{n}|-1\}$. Let $\mathbb{F}(k,t)$ be the family of all the $k-$sets of the form 
\begin{equation*}
X_{n}\sqcup\left\{x^{n+i}_{p_{i}}:1\leq i\leq k-|X_{n}|\right\},
\end{equation*}
where addition in the superscript is modulo $t$ and the sequence $\{p_{n}:n\geq 0\}$ defined as $p_{0}=0$ and for $n\geq1$, $p_{n-1}\leq p_{n}\leq 1+p_{n-1}$ (i.e. $p_{n}$ assigns only $p_{n-1}$ and $1+p_{n-1}$).  
Let $\mathbb{G}(k,t)$ be the family of all the $k-$sets of the form 
\begin{equation*}
X_{n}\sqcup\left\{x^{n+i}_{p}\in X_{n+i}:1\leq i\leq k-|X_{n}|\right\},
\end{equation*}
(i.e. $X_{n}$ is unionised with one element from each $X_{n+i}$) where addition in the superscript and subscript is modulo $t$.
\end{construction}

Clearly, both the families $\mathbb{F}(k,t)$ and $\mathbb{G}(k,t)$ are examples of intersecting family of $k-$sets (since the $t-$cycle is a graph with diameter $\lfloor\frac{t}{2}\rfloor$). It is not hard to see that the family $\mathbb{F}(k,t)$ defined above is a subfamily of $\mathbb{G}(k,t)$. In this context, we emphasise that the   
family $\mathscr{G}$ of \cite[\textsection~2, Example~1 \& Example~2]{MR1383503} is not isomorphic to $\mathbb{F}(k,t)$ .

\begin{theorem}
$\tr{\mathbb{G}(k,t)}=t$.
\end{theorem}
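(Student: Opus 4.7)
The plan is to prove the two inequalities $\tr{\mathbb{G}(k,t)} \leq t$ and $\tr{\mathbb{G}(k,t)} \geq t$ separately. The upper bound is immediate: any set obtained by picking one element from each of the $t$ pairwise disjoint sets $X_0, \ldots, X_{t-1}$ has size $t$ and meets every block of $\mathbb{G}(k,t)$, because by the construction every block contains some $X_n$ entirely and therefore contains the representative chosen from that $X_n$.

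For the lower bound, I would assume that $T$ is a blocking set with $|T| \leq t-1$ and derive a contradiction by a double-counting argument on the cycle. Writing $g(n) := k - |X_n|$, a block centered at $n$ has the form $X_n \sqcup \{x^{n+i}_{p_i} : 1 \leq i \leq g(n)\}$ with independent choices $p_i \in \{0, \ldots, |X_{n+i}|-1\}$; hence a block centered at $n$ disjoint from $T$ exists if and only if $T \cap X_n = \emptyset$ and $X_{n+i} \not\subseteq T$ for every $i \in \{1, \ldots, g(n)\}$. Setting
\[
Z := \{n : T \cap X_n = \emptyset\} \quad \text{and} \quad F := \{n : X_n \subseteq T\},
\]
the blocking hypothesis becomes: every $n \in Z$ admits some $i \in \{1, \ldots, g(n)\}$ with $n + i \in F$.

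The next step is to combine the trivial estimate
\[
|T| \;\geq\; \sum_{j \in F} |X_j| \;+\; \bigl(t - |F| - |Z|\bigr),
\]
in which each of the ``partial'' positions in $\{0, \ldots, t-1\} \setminus (F \cup Z)$ contributes at least one element of $T$, with an upper bound on $|Z|$ arising from the fact that each $f \in F$ covers only positions $n$ at backward distance $f-n \pmod{t}$ lying in $\{1, \ldots, g(n)\}$. The point of the setup is that these two inequalities are tuned so that the $|F|$-contributions telescope and leave $|T| \geq (k-t)|F| + t$. Since $k \geq t$, this forces $|T| \geq t$, contradicting $|T| \leq t-1$.

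The hard part will be the borderline subcase $t$ even with $k = t$, where the crude estimate ``every $f \in F$ covers at most $\lfloor t/2 \rfloor$ positions of $Z$'' is exactly one short. The remedy is to split $F = F_A \sqcup F_B$ according to whether $|X_f| = k - \lfloor t/2 \rfloor$ or $|X_f| = k - \lfloor (t-1)/2 \rfloor$, and to observe that an $f \in F_A$ covers only $\lfloor t/2 \rfloor - 1$ positions, since the position at the maximal backward distance $\lfloor t/2 \rfloor$ sits in the complementary half of the cycle whose $g$-value is strictly smaller, while an $f \in F_B$ still covers $\lfloor t/2 \rfloor$. The one-unit deficit in coverage for $F_A$ is exactly offset by the one-unit smaller $|X_f|$-contribution to $|T|$ from $F_A$, so the two estimates still combine to the required bound $|T| \geq t$.
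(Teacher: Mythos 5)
Your proof is correct, and your lower-bound argument takes a genuinely different route from the paper's. The paper obtains $\tr{\mathbb{G}(k,t)}\leq t$ exactly as you do, but for the reverse inequality it argues constructively: given a $(t-1)$-set $C$, either every $X_{n}$ has a point outside $C$ (so some $X_{n_{0}}$ misses $C$ entirely and a block centred at $n_{0}$ avoiding $C$ is exhibited), or a unique $X_{n}$ is contained in $C$, in which case $|C\smallsetminus X_{n}|\leq\lfloor t/2\rfloor-1$ forces one of the $\lfloor t/2\rfloor$ positions after $n$ to be free of $C$ and a block centred there is written down. Your double count replaces this case analysis, and its arithmetic checks out: with the coverage bound split along $F=F_{A}\sqcup F_{B}$, the combined estimate is $|T|\geq\sum_{f\in F}\bigl(|X_{f}|-1-c(f)\bigr)+t$ where $c(f)$ is the number of positions $f$ covers, and for $t=2r$ one gets $(k-r)-1-(r-1)=k-t$ on $F_{A}$ and $(k-r+1)-1-r=k-t$ on $F_{B}$, while for $t$ odd every position has $|X_{f}|=k-\lfloor t/2\rfloor$ and coverage $\lfloor t/2\rfloor$, again giving $k-t$; hence $|T|\geq(k-t)|F|+t\geq t$ in all cases. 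What your version buys is uniformity and robustness: the refined coverage count automatically handles the interaction between a position swallowed by $T$ and the positions it is supposed to cover, which is precisely where the paper's Case~B is a little too quick --- it asserts that every $X_{m_{0}+i}\smallsetminus C$ is nonempty, but for $t$ even the choice $m_{0}=n+t/2$ lying in the half with $g$-value $t/2$ would require a point of $X_{n}\subseteq C$ (the paper's own count leaves at least two admissible choices of $m_{0}$, so the argument is repairable, but the point is not addressed). The paper's construction, in exchange, is shorter and produces the disjoint block explicitly rather than by contradiction.
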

\begin{proof}
We prepare a $t-$set $B$ by choosing one element from each $X_{n}$, with $0\leq n\leq t-1$, then $B$ is a blocking set of $\mathbb{G}(k,t)$. Therefore $\tr{\mathbb{G}(k,t)}\leq t$. Let $C$ be an arbitrary but fixed set of size $t-1$, to show $\tr{\mathbb{G}(k,t)}\geq t$, it is enough to show there exists a block of $\mathbb{G}(k,t)$, which is disjoint from it. We divide our arguments in the following two exhaustive cases.

\noindent{\tt Case~A :} For each $n$, with $0\leq n\leq t-1$, $|C\cap X_{n}|\leq|X_{n}|-1$.

Since $|C|=t-1$ there exists $X_{n}$, with $0\leq n\leq t-1$, which is disjoint from $C$, call such an $n=n_{0}$. For this case we have, for each $m$, with $1\leq m\leq  k-|X_{n_{0}}|$, $X_{m}\smallsetminus C$ is non empty and choose one element namely, $x^{m}_{p_{m}}\in X_{m}\smallsetminus C$. Therefore, $X_{n_{0}}\sqcup\{x^{n_{0}+i}_{p_{n_{0}+i}}\in X_{n_{0}+i}\smallsetminus C:1\leq i\leq  k-|X_{n_{0}}|\}$ is the required block of $\mathbb{G}(k,t)$, which is disjoint from $C$.

\noindent{\tt Case~B :} For some $n$, with $0\leq n\leq t-1$, $C\cap X_{n}=X_{n}$. (This case will arise for $k$, with $t\leq k\leq t-1+\lfloor\frac{t-1}{2}\rfloor$.)

Since $|C|=t-1$ so there exists at most one $n$, with $0\leq n\leq t-1$, such that $C\cap X_{n}=X_{n}$. So
\begin{equation*}
|C\smallsetminus X_{n}|=t-1-|X_{n}|\leq\left\lfloor\frac{t}{2}\right\rfloor-1-k.
\end{equation*}
Since $k\geq t$ we have, 
\begin{equation*}
|C\smallsetminus X_{n}|=t-1-|X_{n}|\leq\left\lfloor\frac{t}{2}\right\rfloor-1.
\end{equation*}
So there exists at least one $m$, with $n+1\leq m\leq n+\lfloor\frac{t}{2}\rfloor$, so that $(C\smallsetminus X_{n})\cap X_{m}$ is empty, call such an $m=m_{0}$. Therefore, for $1\leq i\leq k-|X_{m_{0}}|$, we have $X_{m_{0}+i}\smallsetminus C$ is non empty and choose one element namely, $x^{m_{0}+i}_{p_{m_{0}+i}}\in X_{m_{0}+i}\smallsetminus C$. Therefore, $X_{m_{0}}\sqcup\{x^{m_{0}+i}_{p_{m_{0}+i}}\in X_{m_{0}+i}\smallsetminus C:1\leq i\leq k-|X_{m_{0}}|\}$ is the required block of $\mathbb{G}(k,t)$, which is disjoint from $C$. 
\end{proof}

\begin{theorem}\label{G_closure}
For $k\geq t+1$, $\mathbb{G}(k,t)$ is a $\CIF{k,t}$. Moreover for each $n$, with $0\leq n\leq t-1$,  
$|X_{n}\cap T|=1$, where $T$ is a transversal of $\mathbb{G}(k,t)$.
\end{theorem}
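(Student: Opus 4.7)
The plan is to establish the ``moreover'' clause first (characterising the transversals of $\mathbb{G}(k,t)$) and then deduce the closure property. Note that $\uk{\mathbb{G}(k,t)} = k$ is built into the construction, $\tr{\mathbb{G}(k,t)} = t$ is the preceding theorem, and $t \leq k-1$ by hypothesis; so what remains is (i) every transversal $T$ satisfies $|X_n \cap T| = 1$ for every $n$, and (ii) every $k$-set blocking $\mathbb{G}(k,t) \sqcup \mathbb{G}(k,t)^{\top}$ is already a block of $\mathbb{G}(k,t)$. The workhorse for both parts is the elementary observation that a set $T$ meets every choice function on pairwise disjoint sets $A_{1}, \ldots, A_{r}$ if and only if $A_{i} \subseteq T$ for some $i$.

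To prove (i), suppose $T$ is a transversal (so $|T| = t$) with $T \cap X_{n_{0}} = \emptyset$ for some $n_{0}$. Applied to the blocks starting at $n_{0}$, the observation forces $X_{m} \subseteq T$ with $m - n_{0} \pmod{t} \in \{1, \ldots, k - |X_{n_{0}}|\}$. Since $|X_{m}| + |X_{m'}| \geq 2(k - \lfloor t/2 \rfloor) > t$ whenever $k \geq t + 1$, this $m$ is unique; and repeating the argument at every other unhit position $n$ places $n$ in the cyclic arc $\{m - 1, \ldots, m - \lfloor t/2 \rfloor\}$, giving at most $\lfloor t/2 \rfloor$ unhit positions. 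On the other hand each hit position $n \neq m$ contributes at least one element to $T \setminus X_{m}$, so
\[
\#\{\text{unhit}\} \;\geq\; t - 1 - |T \setminus X_{m}| \;=\; |X_{m}| - 1 \;\geq\; k - \lfloor t/2 \rfloor - 1 \;\geq\; \lceil t/2 \rceil.
\]
For odd $t$ this already contradicts the upper bound $\lfloor t/2 \rfloor$. For even $t = 2s$ the two bounds collapse to $s$, forcing $|X_{m}| = s + 1$, whence $k = 2s + 1$ and $m \in \{0, \ldots, s - 1\}$ (the ``small-size'' range); but then the necessarily unhit position $n = m - s$ lies in the ``large-size'' range $\{s, \ldots, 2s - 1\}$, so $|X_{n}| = s + 2$, violating the range condition $|X_{n}| \leq k - s = s + 1$ required for $n$ to sit in the arc. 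This proves (i), and combined with $|T| = t$ gives $|X_{n} \cap T| = 1$ for all $n$.

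For (ii), let $C$ be a $k$-set blocking $\mathbb{G}(k,t) \sqcup \mathbb{G}(k,t)^{\top}$. By (i), $\mathbb{G}(k,t)^{\top}$ coincides with the family of all systems of representatives $\{y_{0}, \ldots, y_{t-1}\}$ ($y_{n} \in X_{n}$), so the elementary observation applied to $X_{0}, \ldots, X_{t-1}$ yields a unique $X_{m} \subseteq C$. If $C \cap X_{m + i_{0}} = \emptyset$ for some $i_{0} \in \{1, \ldots, k - |X_{m}|\}$, applying the observation to the blocks starting at $m + i_{0}$ and invoking uniqueness forces $m - (m + i_{0}) \pmod{t} \in \{1, \ldots, k - |X_{m + i_{0}}|\}$, which rearranges to $i_{0} \geq \lceil t/2 \rceil$; combined with $i_{0} \leq k - |X_{m}| \leq \lfloor t/2 \rfloor$, the same odd/even dichotomy as in (i) produces a contradiction. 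Hence $C$ meets each of $X_{m + 1}, \ldots, X_{m + k - |X_{m}|}$, and $|C \setminus X_{m}| = k - |X_{m}|$ forces equality everywhere, yielding $C \in \mathbb{G}(k,t)$. The main obstacle is the even-$t$ case of (i), where both bounds collapse to equality and the contradiction has to be wrung out of the asymmetry of the sizes $|X_{n}|$ across the breakpoint $\lfloor (t-1)/2 \rfloor + 1$; the same fine-grained size analysis is then reused in (ii).
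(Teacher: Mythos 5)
Your proof is correct. It runs on the same two elementary facts as the paper's proof --- a set meets every choice function on pairwise disjoint sets if and only if it contains one of them in full, and no set of size at most $k$ can contain two of the $X_{n}$ --- but you organise the argument in the reverse order: you first identify $\mathbb{G}^{\top}(k,t)$ with the family of all systems of representatives of $X_{0},\ldots,X_{t-1}$ and then read off the closure property from that description, whereas the paper argues closure first, by exhibiting for each non-block $k$-set $C$ either a transversal or a block disjoint from it, and only afterwards (and rather tersely, with the slip ``$|X_{n}|=k$'') records the transversal structure. The real dividend of your ordering is the even-$t$ borderline case. In the paper's pigeonhole step, once $X_{n_{0}}\subseteq C$ and some $X_{m}$ in the arc carries two points of $C$, the witness block based at an empty $X_{m_{0}}$ is only guaranteed to avoid $C$ when its reach $\{m_{0}+1,\ldots,m_{0}+k-|X_{m_{0}}|\}$ misses $n_{0}$; for $t=2s$ with $n_{0}$ in the large-size range and $m_{0}=n_{0}+s$ it does not (e.g.\ $t=6$, $k=7$, $C=X_{3}\cup\{a,b\}$ with $a,b\in X_{4}$, $m_{0}=0$), so the displayed witness cannot be formed as written and one must argue that a different $m_{0}$ works. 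Your matching bounds $\lceil t/2\rceil\leq i_{0}\leq\lfloor t/2\rfloor$, resolved by the jump in $|X_{n}|$ across the breakpoint $\lfloor(t-1)/2\rfloor+1$, close exactly this case, and the symmetric counting in part (i) (at most $\lfloor t/2\rfloor$ unhit positions versus at least $|X_{m}|-1\geq\lceil t/2\rceil$) is likewise airtight. The only cosmetic point: when you assert that $\mathbb{G}^{\top}(k,t)$ \emph{coincides} with the systems of representatives, part (i) gives one inclusion; you should say in a half-sentence that the reverse inclusion holds because every block contains some $X_{n}$ entirely, so every system of representatives is a blocking set of the minimum size $t$.
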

\begin{proof}
Let $C$ be a $k-$set. If for each $n$, with $0\leq n\leq t-1$, $C\cap X_{n}\subsetneqq X_{n}$ then $X_{n}\smallsetminus C$ is non empty and $T(C):=\{x_{n}\in X_{n}\smallsetminus C:0\leq n\leq t-1\}$ is a transversal of $\mathbb{G}(k,t)$, which is disjoint from $C$. So suppose for some $n$, with $0\leq n\leq t-1$, $C\cap X_{n}=X_{n}$; since $|C|=k$ and $k\geq t+1$ therefore there exists at most one such $n$, call it $n_{0}$. Therefore $|C\smallsetminus X_{n_{0}}|=k-|X_{n_{0}}|$. We observe that for each $m\neq n_{0}$, with $0\leq m\leq t-1$,  $C\cap X_{m}\subsetneqq X_{m}$, hence $X_{m}\smallsetminus C$ is non empty and choose $x^{m}_{q_{m}}\in X_{m}\smallsetminus C$. If for some $m$, with $n_{0}+1\leq m\leq n_{0}+\lfloor\frac{t}{2}\rfloor$, $|X_{m}\cap C|\geq2$, then there exists $m_{0}$, with $n_{0}+1\leq m_{0}\leq n_{0}+\lfloor\frac{t}{2}\rfloor$ such that $X_{m_{0}}$ is disjoint from $C$.
Consequently, $X_{m_{0}}\sqcup\{x^{m_{0}+i}_{q_{m_{0}+i}}\in X_{m_{0}+i}\smallsetminus C: 1\leq i\leq k-|X_{m_{0}}|\}$ is disjoint from $C$. So for each $m$, with $n_{0}+1\leq m\leq n_{0}+\lfloor\frac{t}{2}\rfloor$,  $|X_{m}\cap C|=1$. Therefore for such case $C$ is a block of $\mathbb{G}(k,t)$ containing $X_{n_{0}}$. This implies that, for an arbitrary $k-$set $C$ which is not a block of $\mathbb{G}(k,t)$ then there exists a transversal $T(C)$ of $\mathbb{G}(k,t)$ which is disjoint from $C$.

Let $T$ be a transversal of $\mathbb{G}(k,t)$. Since $k\geq t+1$ so for each $n$, with $0\leq n\leq t-1$,  $|X_{n}|=k$ and $|T|=t$; then $X_{n}\smallsetminus T$ is non empty and choose $x^{n}_{q_{n}}\in X_{n}\smallsetminus T$. As we argued in the previous para that we have for each $n$, with $0\leq n\leq t-1$, $|X_{n}\cap T|\leq1$. If for some $m$, with $0\leq m\leq t-1$, $|X_{n}\cap T|<1$ i.e. $X_{m}$ is disjoint from $T$, then $X_{m}\sqcup\{x^{m+i}_{q_{m+i}}\in X_{m+i}\smallsetminus T: 1\leq i\leq k-|X_{m}|\}$ is disjoint from $T$, a contradiction. Hence the second part of the result follows.
\end{proof}

\begin{proposition}\label{F_closure}
Suppose $\tr{\mathbb{F}(k,t)}=t$ for $k\geq t$. Then for $k\geq t+1$, $\mathbb{F}(k,t)$ is a $\CIF{k,t}$.
\end{proposition}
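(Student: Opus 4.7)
Since $k\geq t+1$, the hypothesis $\tr{\mathbb{F}(k,t)}=t$ gives $t\leq k-1$, so it only remains to verify the closure property of $\mathbb{F}(k,t)$. By Proposition~\ref{equivalentclosure}(b) it suffices to show that for every $k$-set $C$ which blocks $\mathbb{F}(k,t)$ with $C\notin\mathbb{F}(k,t)$, there exists a transversal of $\mathbb{F}(k,t)$ disjoint from $C$. The key enabling fact is the inclusion $\mathbb{F}(k,t)\subseteq\mathbb{G}(k,t)$, which yields $\mathbb{G}(k,t)^{\top}\subseteq\mathbb{F}(k,t)^{\top}$; so by Theorem~\ref{G_closure}, every $t$-set containing exactly one element from each $X_n$ is automatically a transversal of $\mathbb{F}(k,t)$.

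I follow the case split from the proof of Theorem~\ref{G_closure}. If $X_n\not\subseteq C$ for every $n$, I pick $y_n\in X_n\smallsetminus C$ and observe that $\{y_0,\ldots,y_{t-1}\}\in\mathbb{G}(k,t)^{\top}\subseteq\mathbb{F}(k,t)^{\top}$ is disjoint from $C$. Otherwise $X_{n_0}\subseteq C$ for a unique $n_0$ (uniqueness follows from $|C|=k<2|X_n|$, which holds for $k\geq t+1$), so $C=X_{n_0}\sqcup E$ with $|E|=k-|X_{n_0}|\leq\lfloor t/2\rfloor$. If $C$ is not a block of $\mathbb{G}(k,t)$, the argument in Theorem~\ref{G_closure} directly produces a transversal in $\mathbb{G}(k,t)^{\top}\subseteq\mathbb{F}(k,t)^{\top}$ disjoint from $C$.

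The essential sub-case is $C\in\mathbb{G}(k,t)\smallsetminus\mathbb{F}(k,t)$. Write $C=X_{n_0}\sqcup\{x^{n_0+i}_{q_i}:1\leq i\leq s\}$ with $s=k-|X_{n_0}|\leq\lfloor t/2\rfloor$, where the sequence $(q_i)$ violates the unit-jump monotonicity defining $\mathbb{F}(k,t)$: either $q_1\geq 2$, or $q_{i+1}\notin\{q_i,q_i+1\}$ for some $i\geq 1$. Because $X_{n_0}\subseteq C$, the desired transversal $T$ must avoid $X_{n_0}$ entirely, hence cannot lie in $\mathbb{G}(k,t)^{\top}$. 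My plan is to build $T$ of size $t$ by placing one element $y_m\in X_m\smallsetminus C$ in each $X_m$ with $m\neq n_0$, plus one extra element in a suitably chosen column $X_{n_0+j}$, the location $j$ and identity of the extras being dictated by the failure in $(q_i)$. When $q_1\geq 2$, taking the extra in $X_{n_0+1}$ so that $\{x^{n_0+1}_0,x^{n_0+1}_1\}\subseteq T$ catches every $\mathbb{F}$-extension at $X_{n_0}$ through its $p_1$-coordinate, since these two subscripts are unused by $E$. For a failure at the first index $i_0\geq 1$, the two subscripts $q_{i_0}$ and $q_{i_0}+1$ in column $X_{n_0+i_0+1}$ are unused by $E$; placing both of them in $T$, together with choices $y_{n_0+i}\neq x^{n_0+i}_{q_i}$ for $1\leq i\leq i_0$, should catch every $\mathbb{F}$-path.

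The main obstacle is this last sub-case: verifying that the $y_{n_0+i}$'s can always be chosen so that every monotone unit-jump path diverging from $(q_1,\ldots,q_{i_0})$ at some $j\leq i_0$ is caught by $y_{n_0+j}$, while every path matching $(q_1,\ldots,q_{i_0})$ through position $i_0$ is caught by one of the two extras in column $X_{n_0+i_0+1}$. Once this construction is pinned down, I still need to check that $T$ (i) hits every $\mathbb{F}$-extension of $X_{n_0}$, (ii) hits every block of $\mathbb{F}(k,t)$ starting at some $X_m$ with $m\neq n_0$ via $y_m\in X_m$, and (iii) remains disjoint from $C$. All three reduce to a cover-of-paths argument on the monotone unit-jump graph, tedious but mechanical once the choice of extras is made explicit.
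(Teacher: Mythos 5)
There is a genuine gap in the sub-case where some $X_{n_{0}}\subseteq C$ but $C$ is not a block of $\mathbb{G}(k,t)$. You claim that ``the argument in Theorem~\ref{G_closure} directly produces a transversal in $\mathbb{G}(k,t)^{\top}\subseteq\mathbb{F}(k,t)^{\top}$ disjoint from $C$,'' but this cannot be right and in fact contradicts your own correct observation two sentences later: by the second part of Theorem~\ref{G_closure}, every transversal of $\mathbb{G}(k,t)$ meets every $X_{n}$ in exactly one point, so no member of $\mathbb{G}(k,t)^{\top}$ can be disjoint from a set containing $X_{n_{0}}$. What the proof of Theorem~\ref{G_closure} actually produces in that situation is a \emph{block} of $\mathbb{G}(k,t)$ disjoint from $C$; that block need not belong to $\mathbb{F}(k,t)$ and has size $k$ rather than $t$, so it neither contradicts the hypothesis that $C$ blocks $\mathbb{F}(k,t)$ nor supplies the transversal you need. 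The sub-case is not vacuous: $C=X_{n_{0}}\sqcup E$ can block $\mathbb{F}(k,t)$ while $E$ fails to pick exactly one point from each relevant tail column. The paper closes it by using transversals of $\mathbb{F}(k,t)$ that are \emph{not} transversals of $\mathbb{G}(k,t)$, namely (normalising $n_{0}=0$) sets of the form $T_{n}\sqcup\{x_{i}\in X_{i}\smallsetminus C: n+1\leq i\leq t-1\}$ with $T_{n}=\{x^{n}_{0},\dots,x^{n}_{n}\}$: every unit-jump path from $X_{0}$ satisfies $p_{n}\leq n$, so such a set blocks $\mathbb{F}(k,t)$, and it is disjoint from $C$ whenever $Y=C\smallsetminus X_{0}$ misses $T_{n}$ --- which happens in particular when $E$ misses the column $X_{n}$. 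Your outline has no substitute for these lopsided transversals.

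Your remaining sub-case ($C\in\mathbb{G}(k,t)\smallsetminus\mathbb{F}(k,t)$) follows the same strategy as the paper --- put a sibling pair in the column just past the first failure of $(q_{i})$ and catch earlier divergences column by column --- but, as you yourself flag, the choice of the $y_{n_{0}+i}$ is not free: a path that first diverges from $(q_{i})$ at column $i$ lands on the unique sibling $x^{n_{0}+i}_{q_{i-1}+1-\epsilon_{i}}$ of $x^{n_{0}+i}_{q_{i}}$ and then continues arbitrarily, so it is caught only if $y_{n_{0}+i}$ is that specific sibling; an arbitrary element of $X_{n_{0}+i}\smallsetminus C$ will not do. The paper resolves exactly this by the inductive construction of the sequence $c_{1},c_{2},\dots$ and the set $S_{m}$ consisting precisely of those siblings, and that verification is the substance of the proof rather than a mechanical afterthought. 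As written, your argument establishes the easy case and the correct framing, but both the non-$\mathbb{G}$ sub-case and the core path-covering construction remain open.
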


\begin{proof}
Let $C$ be a blocking $k-$set of $\mathbb{F}(k,t)$ but $C\notin\mathbb{F}(k,t)$. It is enough to show that there exists at least one $T\in\mathbb{F}^{\top}(k,t)$ disjoint from $C$. If for each integer $n$, with $0\leq n\leq t-1$, there exists at least one $x_{n}\in X_{n}$ such that $x_{n}\notin C$, then $\{x_{n}:0\leq n\leq t-1\}$ is the required $T$ and we are done for this case. Suppose there exists at least one integer $n$, with $0\leq n\leq t-1$, such that $X_{n}\subsetneqq C$. Notice that for each $m$ with $m\neq n$ and $0\leq m\leq t-1$, there exists at least one $x_{m}\in X_{m}$ such that $x_{m}\notin C$. (Suppose it is false, then there exists at least one such integer $m$ with $X_{m}\subsetneqq C$. This implies that $X_{n}\sqcup X_{m}\subset C$, a contradiction arises since $k\geq t+1$.) When $t=2r-1$, then without loss of generality we can assume $X_{0}\subset C$. When $t=2r$, then without loss of generality we can assume either $X_{0}\subset C$ or $X_{\lfloor\frac{t-1}{2}\rfloor+1}=X_{r}\subset 
C$.

\noindent{\tt {Case~A :}} Let $X_{0}\subsetneqq C$. 

Here $C=X_{0}\sqcup Y$. We observe that if $Y$ is disjoint from $T_{n}:=\{x^{n}_{i}:0\leq i\leq n\}$, for some $n$ with $1\leq n\leq \lfloor\frac{t}{2}\rfloor$, then $T_{n}\sqcup\{x_{i}: x_{i}\in X_{i}\smallsetminus C,n+1\leq i\leq t-1\}$ is the required transversal disjoint from $C$ and we are done. So we assume that $Y\cap T_{n}\neq\emptyset$ for each $n$ with $1\leq n\leq \lfloor\frac{t}{2}\rfloor$. Since $|Y|=\lfloor\frac{t}{2}\rfloor$ and $T_{n}$, $1\leq n\leq \lfloor\frac{t}{2}\rfloor$, is $\lfloor\frac{t}{2}\rfloor$ pairwise disjoint sets so $Y$ intersects $T_{n}$ in exactly one point. Since $C\notin\mathbb{F}(k,t)$ so $Y$ is not of the form $\{x^{i}_{p_{i}}:1\leq i\leq \lfloor\frac{t}{2}\rfloor\}$. In the next para, under these assumptions on $Y$, we produce a transversal $T\in\mathbb{F}^{\top}(k,t)$ which is disjoint from both $Y$ and $X_{0}$. (Consequently, such a $T$ is disjoint from both $C$ and $X_{0}$.)

We have $|Y\cap\{x^{1}_{0}, x^{1}_{1}\}|=1$ suppose $x^{1}_{\epsilon_{1}}\in Y$ and $x^{1}_{1-\epsilon_{1}}\notin Y$, where $\epsilon_{1}\in\{0,1\}$. Construct $c_{1}=\epsilon_{1}$. If  $Y$ is disjoint from $\{x^{2}_{c_{1}}, x^{2}_{1+c_{1}}\}$, then
\begin{equation*}
\{x^{1}_{1-\epsilon_{1}},x^{2}_{c_{1}},x^{2}_{1+c_{1}}\}\sqcup\{x_{i}:x_{i}\in X_{i}\smallsetminus C,3\leq i\leq t-1\}
\end{equation*}
is the required transversal and we are done. So let 
$|Y\cap\{x^{2}_{c_{1}}, x^{2}_{1+c_{1}}\}|=1$ suppose $x^{2}_{c_{1}+\epsilon_{2}}\in Y$ and $x^{2}_{c_{1}+1-\epsilon_{2}}\notin Y$, where $\epsilon_{2}\in\{0,1\}$. Construct $c_{2}=c_{1}+\epsilon_{2}$. In general our construction procedure is as follows: suppose we already constructed a sequence $c_{1},c_{2},\ldots,c_{m}$ with the following properties.
\begin{enumerate}[\normalfont(a)]
\item For each $n$ with $1\leq n\leq m$, $c_{n}=c_{n-1}+\epsilon_{n}$ and $\epsilon_{n}\in\{0,1\}$.
\item $\{x^{n}_{c_{n}}:1\leq n\leq m\}\subset Y$.
\item $S_{m}:=\{x^{1}_{1-\epsilon_{1}}\}\sqcup\{x^{n}_{c_{n-1}+1-\epsilon_{n}}:2\leq n\leq m\}$ is disjoint from $Y$.
\end{enumerate} 
Now we construct $c_{m+1}$ if necessary. If  $Y$ is disjoint from $\{x^{m+1}_{c_{m}}, x^{m+1}_{1+c_{m}}\}$, then  
\begin{equation*}
S_{m}\sqcup\{x^{m+1}_{c_{m}}, x^{m+1}_{1+c_{m}}\}\sqcup\{x_{i}:x_{i}\in X_{i}\smallsetminus C, m+2\leq i\leq t-1 \}
\end{equation*}
is the required transversal and we are done. Now let $|Y\cap\{x^{m+1}_{c_{m}}, x^{m+1}_{1+c_{m}}\}|=1$, suppose $x^{m+1}_{c_{m}+\epsilon_{m+1}}\in Y$ and $x^{m+1}_{c_{m}+1-\epsilon_{m+1}}\notin Y$, where $\epsilon_{m+1}\in\{0,1\}$. Construct $c_{m+1}=c_{m}+\epsilon_{m+1}$. This yields 
$\{x^{n}_{c_{n}}:1\leq n\leq m+1\}\subset Y$ and $S_{m+1}$ is disjoint from $Y$. Since $Y$ is not of the form 
$\{x^{i}_{p_{i}}:1\leq i\leq \lfloor\frac{t}{2}\rfloor\}$ so this sequence contains at most 
$\lfloor\frac{t}{2}\rfloor-1$ terms. If this sequence contains exactly $M$ terms, then $Y$ is disjoint from $\{x^{M+1}_{c_{M}}, x^{M+1}_{1+c_{M}}\}$. Consequently, 
\begin{equation*}
S_{M}\sqcup\{x^{M+1}_{c_{M}}, x^{M+1}_{1+c_{M}}\}\sqcup\{x_{i}:x_{i}\in X_{i}\smallsetminus C, M+2\leq i\leq t-1 \}
\end{equation*}
is the required transversal.

\noindent{\tt {Case~B :}} Let $X_{\lfloor\frac{t-1}{2}\rfloor+1}\subsetneqq C$. 

Here $C=X_{\lfloor\frac{t-1}{2}\rfloor+1}\sqcup Y$. This case is similar to the above case. (Precisely, we need to replace $\lfloor\frac{t}{2}\rfloor$ by $\lfloor\frac{t-1}{2}\rfloor$, $x^{(\bullet)}_{p}$ by $x^{\lfloor\frac{t-1}{2}\rfloor+1+(\bullet)}_{p}$ and $x_{(\bullet)}$ by $x_{\lfloor\frac{t-1}{2}\rfloor+1+(\bullet)}$. ) 
\end{proof}

\section{Some applications}\label{beating}

In this section, it is shown that Example~\ref{FOT(4,2)} and Example~\ref{FOT(4&5,3)} are counter examples to \cite[\textsection~3, Conjecture~4]{MR1383503} in special cases. In the following examples we continue with the notation of Construction~\ref{circular_construction}.

\begin{example}\label{FOT(4,2)}
It is easy to check that for $k\geq2$, $\tr{\mathbb{F}(k,2)}=2$. So by Proposition~\ref{F_closure} we have, for $k\geq3$, $\mathbb{F}(k,2)$ is a $\CIF{k,2}$. We observe that for $0\leq p\leq k-2$ and $0\leq q\leq k-1$, the transversals of $\mathbb{F}(k,2)$ are  $\{x^{0}_{p},x^{1}_{q}\}$; $\{x^{1}_{0},x^{1}_{1}\}$. Hence there are $k^2-k+1$ number of transversals and $2$ blocks in $\mathbb{F}(k,2)$. So if we plug in $k=4$ we have a $\CIF{4,2}$ with $2$ blocks and $13$ transversals. Let $\mathcal{A}$ be the unique $\MIF{2}$ isomorphic to $\{\{a,b\},\{b,c\},\{a,c\}\}$ and $P_{\mathcal{A}}\cap P_{\mathbb{F}(4,2)}=\emptyset$. Therefore by Theorem~\ref{embedding_theorem},  
$\mathbb{F}(4,2)\sqcup\mathcal{A}\circledast\mathbb{F}^{\top}(4,2)$ is a $\MIF{4}$ with $42$ blocks and $10$ points. In this $\MIF{4}$ there are $3$ points in $26$ blocks, $5$ points in $14$ blocks and $2$ points in $10$ blocks. 
\end{example}

\begin{example}\label{FOT(4&5,3)}
It is easy to check that for $k\geq3$, $\tr{\mathbb{F}(k,3)}=3$. So by Proposition~\ref{F_closure} we have, for $k\geq4$, $\mathbb{F}(k,3)$ is a $\CIF{k,3}$. We observe that for $0\leq p,q,r\leq k-2$, the transversals of $\mathbb{F}(k,3)$ are  $\{x^{0}_{p},x^{1}_{q},x^{2}_{r}\}$; $\{x^{0}_{0},x^{0}_{1},x^{1}_{p}\}$; $\{x^{1}_{0},x^{1}_{1},x^{2}_{p}\}$ and $\{x^{2}_{0},x^{2}_{1},x^{0}_{p}\}$. Hence there are $(k-1)^{3}+3(k-1)$ number of transversals and $6$ blocks in $\mathbb{F}(k,3)$. So if we plug in $k=4$ and $k=5$ respectively, we have a $\CIF{4,3}$ and $\CIF{5,3}$ with $6$ blocks and $36$ \& $76$ transversals respectively. Let $\mathcal{A}$ be the unique $\MIF{1}$ (respectively, unique $\MIF{2}$ isomorphic to $\{\{a,b\},\{b,c\},\{a,c\}\}$) and $P_{\mathcal{A}}\cap P_{\mathbb{F}(4,3)}=\emptyset$(respectively, $P_{\mathcal{A}}\cap P_{\mathbb{F}(5,3)}=\emptyset$). By Theorem~\ref{embedding_theorem}, 
$\mathbb{F}(4,3)\sqcup\mathcal{A}\circledast\mathbb{F}^{\top}(4,3)$  is a $\MIF{4}$ with $42$ blocks
(respectively, $\mathbb{F}(5,3)\sqcup\mathcal{A}\circledast\mathbb{F}^{\top}(5,3)$ is a $\MIF{5}$ with $234$ blocks). In this $\MIF{4}$ there are $1$ point in $36$ blocks, $6$ points in $16$ blocks and $3$ points in $12$ blocks.
\end{example}

\begin{remark}
Example~\ref{FOT(4,2)} and Example~\ref{FOT(4&5,3)} proves existence of two non isomorphic $\MIF{4}$ with $42$ blocks. It disproves a special case (case $k=4$) of Conjecture~4 in \cite{MR1383503}, which claims such $\MIF{4}$ is unique up to isomorphism.
\end{remark}

\begin{remark}
Example~\ref{FOT(4&5,3)} proves existence of a $\CIF{5,3}$ namely $\mathbb{F}(5,3)$ with $6$ blocks and $76$ transversals. By Corollary~\ref{c_embedding_theorem} we have $\M{5}\geq 234$. It disproves a special case (case $k=5$) of Conjecture~4 in \cite{MR1383503}, which claims $\M{5}=228$.
\end{remark}

For large positive integer $k$, any $\MIF{k}$ with $\M{k}$ blocks contains at least (approximately) $(e-1)k!$ blocks. Disproving this was the prime object of article \cite{MR1383503}. Here we present an alternative simpler construction to prove $\M{k}$ is at least $(\frac{k}{2})^{k-1}$.

\begin{theorem}\label{LovaszConjecture}
Let $k\geq t+1$. Then
\begin{align}\label{M(k)}
\M{k}\geq\left\{\begin{array}{lcl}
                (2r-1)(k-r+1)^{r-1}+(k-r+1)^{2r-1}\M{k-2r+1} & \textnormal{if} & t=2r-1\\
                2r(k-r)^{r-1}+(k-r)^{r}(k-r+1)^{r}\M{k-2r} & \textnormal{if} & t=2r.
            \end{array}\right.
\end{align}
\end{theorem}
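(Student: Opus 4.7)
The plan is to feed the $\CIF{k,t}$ family $\mathbb{G}(k,t)$ of Construction~\ref{circular_construction} into Corollary~\ref{c_embedding_theorem}, which will immediately give $\M{k}\geq b+b^{\top}\M{k-t}$ once I pin down the number $b$ of blocks and $b^{\top}$ of transversals of $\mathbb{G}(k,t)$. Theorem~\ref{G_closure} already certifies that $\mathbb{G}(k,t)$ is a $\CIF{k,t}$ for $k\geq t+1$, so only the counting remains; after substituting $t=2r-1$ and $t=2r$ the two lines of~\eqref{M(k)} will fall out.

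For $b^{\top}$, the second assertion of Theorem~\ref{G_closure} says every transversal meets each $X_n$ in exactly one point, and conversely any such $t$-set meets every block (each block of $\mathbb{G}(k,t)$ contains some full $X_n$), so $b^{\top}=\prod_{n=0}^{t-1}|X_n|$. Reading off the sizes from Construction~\ref{circular_construction} gives $(k-r+1)^{2r-1}$ in the odd case and $(k-r)^{r}(k-r+1)^{r}$ in the even case, matching the coefficient of $\M{k-t}$ on each line of~\eqref{M(k)}.

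For $b$, I would first observe that each block has a unique ``base'' $X_n$ it contains, because $|X_n|+|X_m|>k$ for distinct $n,m$ under the hypothesis $k\geq t+1$ (explicitly $2(k-r)>k$ in the even case and $2(k-r+1)>k$ in the odd case); a block with base $X_n$ is then determined by choosing one point from each of $X_{n+1},\dots,X_{n+k-|X_n|}$, giving $b=\sum_{n=0}^{t-1}\prod_{i=1}^{k-|X_n|}|X_{n+i}|$. In the odd case every $|X_n|=k-r+1$ and $k-|X_n|=r-1$, so the sum collapses to $(2r-1)(k-r+1)^{r-1}$. In the even case each factor $|X_{n+i}|$ is at least $k-r$, so every base contributes at least $(k-r)^{r-1}$ blocks (for small bases one even gets $(k-r)^{r}\geq(k-r)^{r-1}$ using $k\geq 2r+1>r$), and summing over the $2r$ bases gives $b\geq 2r(k-r)^{r-1}$.

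Plugging these counts into $\M{k}\geq b+b^{\top}\M{k-t}$ yields both lines of~\eqref{M(k)}. The only real delicacy is in the even case, where the $X_n$'s come in two different sizes and the per-base block count is nonuniform; I will collapse this nonuniformity by using the uniform lower bound $k-r$ on $|X_n|$, which loses some blocks but retains enough to recover the stated inequality.
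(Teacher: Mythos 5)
Your proposal is correct and follows essentially the same route as the paper: certify $\mathbb{G}(k,t)$ as a $\CIF{k,t}$ via Theorem~\ref{G_closure}, count its blocks base-by-base and its transversals as one point per $X_{n}$, and feed the counts into Corollary~\ref{c_embedding_theorem}. Your explicit remarks on the uniqueness of the base $X_{n}$ and on the nonuniform block sizes in the even case only make explicit what the paper leaves implicit behind its ``$\geq$''.
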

\begin{proof}
Let $\mathcal{A}$ be a $\MIF{k-t}$ with $\M{k-t}$ blocks. By Theorem~\ref{G_closure} and Theorem~\ref{embedding_theorem}, it follows that $\mathbb{G}(k,t)\sqcup\mathcal{A}\circledast\mathbb{G}^{\top}(k.t)$ is a $\MIF{k}$. Here we observe that any block of $\mathbb{G}(k,t)$ is of the form 
\begin{equation*}
X_{n}\sqcup\{x^{n+i}_{p}\in X_{n+i}:1\leq i\leq k-|X_{n}|\},
\end{equation*}
where $0\leq n\leq t-1$. It means that for each $X_{n}$, with $0\leq n\leq t-1$, and for each $i$, with $1\leq i\leq k-|X_{n}|$, there are $|X_{n+i}|$ number of choices for $x^{n+i}_{p}$. Therefore  there are $\overset{k-|X_{n}|}{\underset{i=1}\prod}|X_{n+i}|$ choices for such blocks. Hence,
\begin{align*}
|\mathbb{G}(k,t)|\geq\left\{\begin{array}{lcl}
                (2r-1)(k-r+1)^{r-1} & \textnormal{if} & t=2r-1\\
                2r(k-r)^{r-1} & \textnormal{if} & t=2r.
            \end{array}\right.
\end{align*}
Also by using Theorem~\ref{G_closure}, we have
\begin{align*}
|\mathbb{G}^{\top}(k,t)|=\left\{\begin{array}{lcl}
                (k-r+1)^{2r-1} & \textnormal{if} & t=2r-1\\
                (k-r)^{r}(k-r+1)^{r} & \textnormal{if} & t=2r.
            \end{array}\right.
\end{align*}
Therefore the results follows from Corollary~\ref{c_embedding_theorem}. 
\end{proof}

If we plug in $t=k-1$ in \eqref{M(k)}, we have the following corollary (Theorem~1 of \cite[\textsection~2]{MR1383503}), which shows that $\M{k}$ grows like at least $(\frac{k}{2})^{k-1}$ and it disproves Lov\'{a}sz Conjecture.

\begin{corollary}[{Frankl-Ota-Tokushige}]\label{beating_LovaszConjecture}
 \begin{align*}
\M{k}\geq\left\{\begin{array}{lcl}
               (\frac{k}{2}+1)^{k-1} & \textnormal{if} & k \textup{ is even }\\
                (\frac{k+1}{2})^{\frac{k-1}{2}}(\frac{k+3}{2})^{\frac{k-1}{2}} & \textnormal{if} & k \textup{ is odd }.
            \end{array}\right.
\end{align*}
\end{corollary}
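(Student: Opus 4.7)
The plan is to specialize Theorem~\ref{LovaszConjecture} to the boundary case $t = k-1$, which collapses the recursion to its base: since $t=k-1$ makes $k-t = 1$, the residual term becomes $\M{1}$, and this equals $1$ because the unique $\MIF{1}$ is $\{\{\ast\}\}$. Thus the recursion unwinds in a single step and yields an unconditional closed-form lower bound. The hypothesis $k \geq t+1$ of Theorem~\ref{LovaszConjecture} is trivially satisfied when $t = k-1$, so no side-condition needs checking.

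The execution splits according to the parity of $k$, matching the two branches of \eqref{M(k)}. If $k$ is even, I would write $t = k-1 = 2r-1$ with $r = k/2$. Then the first branch of \eqref{M(k)} gives
\begin{equation*}
\M{k} \;\geq\; (k-1)\bigl(\tfrac{k}{2}+1\bigr)^{k/2-1} \;+\; \bigl(\tfrac{k}{2}+1\bigr)^{k-1}\,\M{1},
\end{equation*}
after the substitutions $k-r+1 = k/2+1$, $2r-1 = k-1$, and $k-2r+1 = 1$. Using $\M{1}=1$ and dropping the first (positive) summand yields $\M{k} \geq (k/2+1)^{k-1}$, which is the even case of the claim.

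If $k$ is odd, I would write $t = k-1 = 2r$ with $r = (k-1)/2$. Then the second branch of \eqref{M(k)} gives, via $k-r = (k+1)/2$, $k-r+1 = (k+3)/2$, and $k-2r = 1$,
\begin{equation*}
\M{k} \;\geq\; (k-1)\bigl(\tfrac{k+1}{2}\bigr)^{(k-3)/2} \;+\; \bigl(\tfrac{k+1}{2}\bigr)^{(k-1)/2}\bigl(\tfrac{k+3}{2}\bigr)^{(k-1)/2}\,\M{1}.
\end{equation*}
Again $\M{1} = 1$ and discarding the first summand produces the odd case of the claim.

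There is no genuine obstacle here: once Theorem~\ref{LovaszConjecture} is in hand, the corollary is pure bookkeeping, with the only subtle point being to choose the parity of $t$ to match that of $k-1$ so as to land in the correct branch of \eqref{M(k)}. The entire content is that the $\mathbb{G}(k,k-1)$ construction, via Theorem~\ref{G_closure} and the embedding Theorem~\ref{embedding_theorem}, already grows like $(k/2)^{k-1}$ when combined with the trivial $\MIF{1}$, with no further recursion needed.
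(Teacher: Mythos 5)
Your proposal is correct and follows exactly the paper's route: the paper likewise obtains the corollary by substituting $t=k-1$ into \eqref{M(k)} of Theorem~\ref{LovaszConjecture}, splitting on the parity of $k$, using $\M{1}=1$, and dropping the positive first summand. Your parity bookkeeping ($r=k/2$ in the even case, $r=(k-1)/2$ in the odd case) and the resulting exponents all check out.
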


The main problem of interest is to find a $\MIF{k}$ with $\M{k}$ blocks. Using Theorem~\ref{embedding_theorem}, we observe that this problem actually boils down to find a $\CIF{k,t}$ $\mathcal{F}$ and a $\MIF{k-t}$ $\mathcal{A}$ so that $|\mathcal{F}|+|\mathcal{A}||\mathcal{F}^{\top}|$ is maximum for some suitable choice of $t\leq k-1$. So we formulate the following conjecture and close this article.

\begin{conjecture}
For large positive integer $k$, any $\MIF{k}$ with $\M{k}$ blocks contains a $\CIF{k,t}$, for some $t\leq k-1$, which is isomorphic to a subfamily of $\mathbb{G}(k,t)$.
\end{conjecture}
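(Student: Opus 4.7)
Since the statement is a conjecture rather than a theorem, the best one can offer is a plausible strategy. The plan is to proceed by strong induction on $k$, exploiting the decomposition machinery built in Sections \ref{CIF}--\ref{circular_constructions}. The base cases for small $k$ can in principle be verified by the sort of case analysis carried out in Examples~\ref{FOT(4,2)} and \ref{FOT(4&5,3)}. For the induction step, starting from an arbitrary $\MIF{k}$ $\mathcal{X}$ with $|\mathcal{X}|=\M{k}$, the goal is to produce a subfamily of $\mathcal{X}$ that is a $\CIF{k,t}$ embedding isomorphically into $\mathbb{G}(k,t)$ for some admissible $t$.

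The first technical step would be to locate the CIF ``core'' of $\mathcal{X}$. By the corollary following Theorem~\ref{inverse}, any choice of point $\alpha\in P_{\mathcal{X}}$ already gives a decomposition $\mathcal{X}=\mathcal{F}_{\alpha}\sqcup\alpha\circledast\mathcal{F}_{\alpha}^{\top}$ with $\mathcal{F}_{\alpha}$ a $\CIF{k,k-1}$; more generally, one wants to find a set $Y\subseteq P_{\mathcal{X}}$ of size $k-t$ such that the ``link'' $\{B\smallsetminus Y:Y\subseteq B\in\mathcal{X}\}$ is isomorphic to a $\MIF{k-t}$ in the product form required by Theorem~\ref{inverse}, thereby cutting out a $\CIF{k,t}$ $\mathcal{F}$. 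The second step is to invoke Corollary~\ref{c_embedding_theorem} in the contrapositive: the equality $\M{k}=|\mathcal{F}|+|\mathcal{F}^{\top}|\M{k-t}$ forces $\mathcal{F}$ to be extremal for the weighted count $b+b^{\top}\M{k-t}$ among all $\CIF{k,t}$. The third step is to show structurally that any such extremal CIF must be isomorphic to a subfamily of $\mathbb{G}(k,t)$; here the leverage comes from Theorem~\ref{G_closure}, which constrains transversals of $\mathbb{G}(k,t)$ to pick exactly one element from each $X_{n}$. The hope is to argue that an extremal CIF must partition its point set into $t$ ``coordinate classes'' of nearly balanced size on which the transversal structure is coordinate-wise, and then to match the cyclic adjacency of those classes with that of $\mathbb{G}(k,t)$.

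The principal obstacle is the third step: characterizing extremal CIFs with enough structural rigidity to force the cyclic shape of $\mathbb{G}(k,t)$. This is essentially an extremal problem about intersecting families subject to the closure constraint, and one has no immediate analogue of the shifting, kernel, or $\Delta$-system techniques usually available for Erd\H{o}s--Ko--Rado type arguments, since those operations need not preserve the closure property $\mathcal{F}=(\mathcal{F}\sqcup\mathcal{F}^{\top})^{\top}$. A promising angle is to develop a closure-preserving compression and then show that it drives the point set toward a union of coordinate classes. A secondary obstacle lies already in the first step: Theorem~\ref{inverse} demands the very specific product form $\mathcal{X}\smallsetminus\mathcal{F}=\mathcal{A}\circledast\mathcal{F}^{\top}$, and it is not clear that an extremal $\mathcal{X}$ always admits such a decomposition for some $t<k-1$. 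One may have to argue this indirectly: if no such product decomposition existed at the optimal $t$, the inequality of Corollary~\ref{c_embedding_theorem} would be strict for every candidate $\mathcal{F}\subseteq\mathcal{X}$, contradicting $|\mathcal{X}|=\M{k}$ once the extremal bound on CIFs from the third step is in hand. In short, the two steps reinforce each other, but both must be established before the conjecture can be concluded.
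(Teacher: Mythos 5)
The statement you are addressing is not a theorem of the paper: it is the conjecture with which the author closes the article, and the paper contains no proof of it. There is therefore no argument in the paper to compare yours against, and your submission, as you yourself acknowledge, is a strategy outline rather than a proof; it does not establish the statement.

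As for the outline itself, the decisive gap is your third step: you must show that any $\CIF{k,t}$ extremal for the weighted count $b+b^{\top}\M{k-t}$ is isomorphic to a subfamily of $\mathbb{G}(k,t)$, and you offer nothing for this beyond the hope of an undefined ``closure-preserving compression.'' That step \emph{is} the content of the conjecture. A second, independent gap sits in your second step: Corollary~\ref{c_embedding_theorem} is a one-way inequality $\M{k}\geq b+b^{\top}\M{k-t}$, obtained by building a specific $\MIF{k}$ from a given CIF; it cannot be ``invoked in the contrapositive'' to force equality for a CIF sitting inside an extremal $\mathcal{X}$. Equality would require first knowing that $\mathcal{X}$ decomposes as $\mathcal{F}\sqcup\mathcal{A}\circledast\mathcal{F}^{\top}$ with $\mathcal{A}$ a $\MIF{k-t}$ attaining $\M{k-t}$ blocks, and the only decomposition the paper guarantees unconditionally (the corollary to Theorem~\ref{inverse}) is the case $t=k-1$, where $\mathcal{A}$ is a single point and the recursion gives no traction. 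For $t<k-1$ nothing produces the product form $\mathcal{X}\smallsetminus\mathcal{F}=\mathcal{A}\circledast\mathcal{F}^{\top}$, and your proposed indirect argument for its existence presupposes the extremal characterization of step three, so the two steps are circular rather than mutually reinforcing. The honest conclusion is that the proposal identifies the right intermediate targets but proves none of them, leaving the statement exactly where the paper leaves it: open.
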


\begin{acknowledgement}
It is author's radiant sentiment to place on record his sincere gratitude to Professor Bhaskar Bagchi, for the guidance which were extremely valuable to study this subject area theoretically.
\end{acknowledgement}

\end{document}